%---------------------------------------------------------------------------

\documentclass[12pt]{amsart}
\usepackage{amsfonts}
\usepackage{amsthm}
\usepackage{amssymb}
\usepackage{amsmath}
\usepackage{amsmath,amscd}
\usepackage{epsfig}
\usepackage{url}
\usepackage{color}
\usepackage[margin=1.1in]{geometry}
\usepackage{setspace}
\usepackage{graphicx}%
\setcounter{MaxMatrixCols}{30}

%TCIDATA{OutputFilter=latex2.dll}
%TCIDATA{Version=5.50.0.2960}
%TCIDATA{LastRevised=Monday, June 22, 2015 15:49:46}
%TCIDATA{<META NAME="GraphicsSave" CONTENT="32">}
%TCIDATA{<META NAME="SaveForMode" CONTENT="1">}
%TCIDATA{BibliographyScheme=Manual}
%BeginMSIPreambleData

\providecommand{\U}[1]{\protect\rule{.1in}{.1in}}

%EndMSIPreambleData

\onehalfspacing
\newtheorem{theorem}{Theorem}
\newtheorem*{theorem*}{Theorem}
\newtheorem{corollary}{Corollary}
\newtheorem{lemma}{Lemma}
\newtheorem{proposition}{Proposition}
\newtheorem{definition}{Definition}

\newcommand{\R}{\mathbb{R}}

\def\p{\partial}
\def\R{\mathbb{R}}

\def\Z{\mathbb{Z}}

\begin{document}
\title{Legendrian curve shortening flow in $\mathbb{R}^{3}$}

\author{Gregory Drugan}
\address{Department of Mathematics, University of Oregon, Eugene OR 97403}
\email{drugan@uoregon.edu}
\author{Weiyong He}
\address{Department of Mathematics, University of Oregon, Eugene OR 97403}
\email{whe@oregon.eduu}
\thanks{The second author was partially supported by NSF Grant DMS-1005392.}
\author{Micah W. Warren}
\address{Department of Mathematics, University of Oregon, Eugene OR 97403}
\email{micahw@oregon.edu}
\thanks{The third author was partially supported by NSF Grant DMS-1438359. }
\maketitle

\begin{abstract}
Motivated by Legendrian curve shortening flows in $\mathbb{R}^{3}$, we study the curve shortening flow of figure-eight curves in the plane. We show that, under some symmetry and curvature conditions, a figure-eight curve will shrink to a point at the first singular time.
\end{abstract}

\pagestyle{empty}

\bigskip

\section{Introduction}

We show the following theorem.

\begin{theorem}
Suppose that $\gamma(u)$ is a smoothly immersed figure-eight shape in $\mathbb{R}^{2}$ that encloses zero signed area, is symmetric about an interior axis, and has exactly two inflection points. Then the curve shortening flow collapses $\gamma$ to a point at the first singular time.
\end{theorem}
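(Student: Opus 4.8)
The plan is to combine the standard structure theory for curve shortening flow---invariants plus a blow-up analysis at the first singular time---with the three hypotheses (zero signed area, reflection symmetry, exactly two inflection points), which I expect will force the only possible singular behaviour to be a genuine collapse. Throughout, write $[0,T)$ for the maximal interval of smooth existence, $\ell$ for the symmetry axis, and $\sigma$ for the reflection across $\ell$. \emph{Step 1 (invariants).} By uniqueness of the flow, $\gamma_t$ stays $\sigma$-invariant. The signed enclosed area evolves by $\frac{d}{dt}A=-\int_{\gamma_t}\kappa\,ds=-2\pi\rho$ with $\rho$ the rotation index; a figure eight has $\rho=0$, so $A\equiv 0$ persists, i.e.\ the two lobes always enclose equal area. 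By Angenent's Sturmian theorem the number of self-intersections of $\gamma_t$ and the number of zeros of $\kappa$ are non-increasing; by the Umlaufsatz a closed immersed curve with $\rho=0$ has a self-intersection, and since $\int\kappa\,ds=0$ with $\kappa\not\equiv 0$ the curvature changes sign, forcing at least two inflection points. Hence for every $t<T$, $\gamma_t$ has exactly one self-intersection $c_t$---which, being $\sigma$-invariant, lies on $\ell$---and exactly two inflection points, and is therefore a union of two locally convex arcs interchanged by $\sigma$, meeting at the inflection points and crossing once at $c_t$.

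\emph{Step 2 (finite-time singularity; reduction).} Comparison with a large shrinking circle enclosing $\gamma_0$ gives $T<\infty$, and $\max_{\gamma_t}|\kappa|\to\infty$ as $t\uparrow T$; by the local regularity theory the flow converges smoothly away from a finite $\sigma$-invariant singular set $\Sigma\subset\mathbb{R}^2$ to a limit $\gamma_T$. It suffices to prove $\operatorname{diam}\gamma_t\to 0$, so assume instead that $\operatorname{diam}\gamma_{t_k}\ge\delta>0$ along some $t_k\uparrow T$---the singularity is ``localized'' and $\gamma_T$ retains a nondegenerate embedded arc.

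\emph{Step 3 (blow-up; resolving the singularity).} At $p\in\Sigma$, Huisken's monotonicity formula produces a rescaled limit which is a planar self-shrinker: by Abresch--Langer, a line through the origin (possibly with multiplicity), the shrinking circle, or an Abresch--Langer curve. A multiplicity-one line is impossible at a singular point. If the limit is the circle or an Abresch--Langer curve, a closed subloop of $\gamma_t$ is pinching; the only closed subloops of a figure eight are the two lobes and the whole curve, and both lobes contain $c_t$, so $c_t\to p$, and by symmetry the mirror lobe pinches to $\sigma(p)$; with the equal-area property this forces $p=\sigma(p)\in\ell$ and collapse of the entire curve to $p$, i.e.\ $\operatorname{diam}\gamma_t\to 0$, against the assumption. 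The dangerous case is a multiplicity-$\ge 2$ line, i.e.\ a sub-arc of $\gamma_t$ collapsing onto a segment of positive length---in the extreme, the whole symmetric curve flattening onto $\ell$. Excluding this is the heart of the matter. I would use that, by Step 1, $\gamma_t$ is built from two convex arcs and run a Gage--Hamilton-type ``rounding-up'' estimate on them---monotonicity of a suitable isoperimetric-type quantity adapted to the convex arcs, exploiting the convexity and the symmetry-fixed behaviour at $c_t$---to show that any collapsing region becomes round rather than flat, hence shrinks to a point, not a segment. (Equivalently one may try Angenent-style intersection comparison: compare $\gamma_t$ with the families of shrinking circles centred on $\ell$ and of rescaled Abresch--Langer curves, feeding the intersection bounds of Step 1 in to trap $\gamma_t$ in a shrinking region.) Granting this, the only remaining singular behaviour is the round pinching already treated, so $\operatorname{diam}\gamma_t\to 0$ and $\gamma$ shrinks to the single point $\lim_{t\uparrow T}c_t\in\ell$.

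\emph{Main obstacle.} The crux is excluding a ``flat'' (multiplicity-$\ge 2$ line) singularity in Step 3---showing no piece of the curve, and in particular not the curve as a whole, collapses onto a positive-length segment. This is where all three hypotheses are needed together: ``exactly two inflection points'' makes $\gamma_t$ a union of two convex arcs amenable to a Gage--Hamilton-type argument, ``zero area'' ties the two lobes' sizes together, and the reflection symmetry both pins the crossing point to $\ell$ and forces the two lobes to evolve---and collapse---in tandem.
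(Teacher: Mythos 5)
Your Steps 1 and 2 are sound and agree with the paper's setup, but the argument has a genuine gap exactly where you flag it: Step 3 never actually excludes the ``flat'' singularity in which a sub-arc (or the whole symmetric curve) collapses onto a segment of positive length --- a multiplicity-$\geq 2$ line as a tangent flow, or a grim reaper after type-II rescaling. You propose either a ``Gage--Hamilton-type rounding-up estimate on the two convex arcs'' or an ``Angenent-style intersection comparison,'' but neither is carried out, and neither obviously applies: the Gage--Hamilton and Huisken isoperimetric/roundness estimates are proved for embedded (or convex) closed curves, not for two locally convex arcs joined at inflection points and a transverse crossing, and it is precisely the possible grim-reaper degeneration near the crossing point that makes Grayson's conjecture hard. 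The paper's lemma that $|A|\to 0$ does use a blow-up-plus-chord-arc argument (Huisken's distance-ratio monotonicity) to rule out grim reapers in the interior of an embedded arc with smooth endpoints, but that mechanism fails near the crossing, which is where your dangerous case lives. As written, your proposal therefore establishes only area collapse, not diameter collapse.

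The paper sidesteps the blow-up dichotomy entirely. With only two inflection points, away from the double-inflection case the curvature at the crossing has a sign, so one lobe is convex and the crossing point --- pinned to the symmetry axis --- moves \emph{monotonically} along that axis inside a bounded region, hence converges to a single point $p$. The Grayson--Angenent observation (Lemma~\ref{crosspoint}: by the strong maximum principle, a positive-length smooth arc in the singular limit would be the tangential limit of two distinct smooth sheets, which is impossible) then forces the entire singular limit into the limit set of the crossing points, namely $\{p\}$. The double-inflection case is handled separately by a grim-reaper \emph{barrier} argument showing the horizontal extent decays (which also yields the rate in Theorem~\ref{collapse_rate}). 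If you want to complete your route, the missing ingredient is precisely this monotone-crossing-point/strong-maximum-principle mechanism; without it, the segment-collapse alternative in your Step 3 remains open.
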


This gives a partial answer to a conjecture of Grayson \cite{G2}, which states that all figure-eight curves with zero signed area should shrink to a point under curve shortening flow.  In particular, our main result provides a class of curves that do in fact shrink to a point.

These figure-eight shapes arise naturally in the study of Legendrian curve shortening flow, as defined by Smoczyk \cite{Smoczyk}. Combining this observation with our main theorem, we obtain the following result.

\begin{corollary}
With the standard contact structure $\eta=dz-ydx$ on $\mathbb{R}^{3}$, there exist embedded Legendrian immersions $\gamma: S^{1} \rightarrow \mathbb{R}^{3}$ such that a Legendrian curve shortening flow shrinks $\gamma$ to a point at the first singular time. 
\end{corollary}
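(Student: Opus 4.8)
The plan is to identify Legendrian curve shortening flows in $(\mathbb{R}^3,\eta)$ with planar curve shortening flows of the type treated in Theorem 1, using the Lagrangian projection $\pi(x,y,z)=(x,y)$. Writing a Legendrian immersion as $\gamma(u)=(x(u),y(u),z(u))$, the Legendrian condition $\eta(\gamma')=0$ reads $z'=y\,x'$, so $z(u)=z(u_0)+\int_{u_0}^{u}y\,dx$ is determined by $\bar\gamma:=\pi\circ\gamma=(x,y)$ up to an additive constant, and $\gamma$ descends to a closed immersion of $S^1$ exactly when $\oint y\,dx=0$, i.e. when $\bar\gamma$ encloses zero signed area. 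The differential $d\pi$ restricts to a bundle isomorphism from the contact plane field $\xi=\ker\eta$ onto $T\mathbb{R}^2$; this isomorphism is not conformal for the Euclidean metric, but it is a fibrewise isometry once $\mathbb{R}^3$ is equipped with the contact-adapted metric $dx^2+dy^2+\eta^2$. With that metric, Smoczyk's Legendrian curve shortening flow of $\gamma$ projects under $\pi$ to the ordinary curve shortening flow of $\bar\gamma$ — the length, curvature and normal velocity of $\gamma$ all coincide with those of $\bar\gamma$ because $\gamma$ is everywhere tangent to $\xi$ — and conversely any planar curve shortening flow through curves of zero signed area lifts, by the displayed formula for $z$, to a Legendrian curve shortening flow. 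This is the observation referred to above.

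Granting this dictionary, I would proceed as follows. Planar figure-eight curves $\gamma_0$ satisfying all hypotheses of Theorem 1 clearly exist; fix one and let $\tilde\gamma_0$ be its Legendrian lift. The only self-intersection of $\gamma_0$ is the crossing of the figure eight, attained at two parameters $u_1<u_2$, and the two lifted points are distinct because $z_0(u_2)-z_0(u_1)=\int_{u_1}^{u_2}y_0\,dx_0$ equals $\pm$ the area enclosed by the lobe cut off on $[u_1,u_2]$, which is nonzero. Hence $\tilde\gamma_0\colon S^1\to\mathbb{R}^3$ is an embedded Legendrian immersion.

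Next I would run the planar curve shortening flow $\gamma_t$ starting from $\gamma_0$, with first singular time $T$, and let $\tilde\gamma_t$ be the induced Legendrian flow. Two facts make the construction persist on $[0,T)$. (i) The curves $\gamma_t$ keep zero signed area: along curve shortening flow the time derivative of the signed area is $-\int_{\gamma_t}\kappa\,ds$, which vanishes because the total curvature of a figure eight is $0$; and the reflection symmetry persists by uniqueness of the flow, so each $\gamma_t$ has a well-defined lift. (ii) Each $\tilde\gamma_t$ is embedded: the number of self-intersections of $\gamma_t$ is non-increasing and cannot drop to $0$ while $\gamma_t$ is a smooth immersed closed curve, since an embedded closed curve has total curvature $\pm2\pi\neq0$; hence $\gamma_t$ remains a topological figure eight whose two lobes are Jordan loops of nonzero area, and exactly as at $t=0$ the two heights over the crossing stay distinct. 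By Theorem 1, $\gamma_t$ collapses to a point $p$ as $t\to T$; since $\pi$ is a fibrewise isometry on $\xi$, $T$ is also the first singular time of $\tilde\gamma_t$, and, normalising the lift by $z_t(u_0)\equiv 0$, we obtain $\operatorname{diam}\tilde\gamma_t\le\operatorname{diam}\gamma_t+2\sup_{S^1}|z_t|\le\operatorname{diam}\gamma_t+2\bigl(\sup_{S^1}|y_t|\bigr)\operatorname{length}(\gamma_t)\to0$, so $\tilde\gamma_t$ shrinks to the point $(p,0)$ at time $T$. This would give the corollary.

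I expect the first step to be the main obstacle: showing that Smoczyk's variationally defined Legendrian curve shortening flow is, under $\pi$, \emph{exactly} planar curve shortening flow. One has to commit to the ambient metric that makes $d\pi|_\xi$ an isometry, verify that the Legendrian constraint is preserved along the flow so the lift is genuinely Legendrian for all $t$, and check that the $\partial_z$-component of $\partial_t\tilde\gamma_t$ forced by that constraint is precisely $\partial_t\!\int y\,dx$, so no spurious normal motion is introduced. Once this correspondence is in hand, the remaining ingredients — conservation of signed area and of symmetry, persistence of the figure-eight topology, and the diameter estimate — are routine.
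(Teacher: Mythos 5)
Your proposal is correct and follows essentially the same route as the paper: the projection/lift correspondence you describe (with the metric $dx^2+dy^2+\eta\otimes\eta$, the height function $z(u)=z(u_0)+\int_{u_0}^u y\,dx$, preservation of $\int\kappa=0$ and of zero signed area) is exactly what the paper establishes in Section 2 (Lemma \ref{Lemma1} and Theorem \ref{lcsf:exists}), and the corollary is then Theorem 1 combined with the observation that the lift is embedded because the two preimages of the crossing differ in height by the (nonzero) area of a lobe. The step you flag as the main obstacle is precisely the content of the paper's Theorem \ref{lcsf:exists}, so no gap remains.
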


Because mean curvature flow does not preserve the Legendrian condition, one has to modify the flow to obtain a flow of Legendrian immersions.  A natural approach is to put a metric on the space of Legendrian immersions and find the negative gradient flow for the length functional (or the area funtional, for higher dimensions).  L\^{e} \cite{Le} has suggested a gradient flow which produces a fourth order equation. In the curve shortening case, this is the curve diffusion flow, which has been studied in \cite{EGMWW}. In \cite{EGMWW}, the authors show that the lemniscate of Bernoulli shrinks to a point in finite time under the gradient flow defined by L\^{e}.  There are two other natural candidates for metrics. One metric produces a nonlinear system, which to our knowledge has not been studied. The other is an indefinite metric,  which measures the deformation in the Legendrian normal direction.  Curves of maximal slope with respect to this metric (in the sense of \cite{AGS}) are precisely the Legendrian curve shortening flows.  These are non-unique, but they do have the property that they always project to curve shortening flow on the base space.  For more discussion of the gradient flows associated to Legendrian curve shortening flow, see Section \ref{GF}.

The proof of our main result is described heuristically as follows. Under the symmetry condition, the curve shortening flow forces the figure-eight to move towards an axis. Using volume and convexity considerations, one can then carefully place symmetric grim reaper curves around the figure-eight. Maximum principle arguments show that if the curve is not collapsing fast enough, the grim reapers must be moving with enough speed to push the curve to infinity, which is a contradiction. In fact, this argument gives us a rate of collapse. 

\bigskip

\section{Legendrian curve shortening flow}

Consider the contact manifold ${\mathbb{R}}^{3}$ with coordinates $(x,y,z)$ and contact form: $\eta= dz -y dx$. Let $g = dx \otimes dx + dy \otimes dy +\eta\otimes\eta$ be the associated Riemannian metric, and let $\xi= \partial_{z}$ denote the Reeb vector field.

We define the vector fields $X=\partial_{x}+y\partial_{z}$, $Y=\partial_{y}$, and $Z=\partial_{z}$ (Reeb vector field), which form an orthonormal basis for ${\mathbb{R}}^{3}$ with respect to $g$. We have the relations:
\[
\lbrack X,Y]=-Z,\quad\lbrack X,Z]=[Y,Z]=0.
\]
In terms of the Levi-Civita connection $\nabla$, we record the identities:
\begin{equation}
\nabla_{X}X=\nabla_{Y}Y=\nabla_{Z}Z=0,\quad\nabla_{X}Y=-\nabla_{Y}X=-Z/2,
\label{cov}
\end{equation}
\[
\nabla_{X}Z=\nabla_{Z}X=Y/2,\quad\nabla_{Y}Z=\nabla_{Z}Y=-X/2.
\]

\begin{definition}
A $C^{1}$ regular curve $\gamma:S^{1}\rightarrow{\mathbb{R}}^{3}$ is called \textbf{Legendrian} if
\begin{equation}
\gamma^{\ast}(\eta)=0. \label{defLeg1}
\end{equation}
\end{definition}

If $\gamma:S^{1}\rightarrow{\mathbb{R}}^{3}$ is a Legendrian curve with coordinate $u$: $\gamma=\left(  x(u),y(u),z(u)\right)$, then condition (\ref{defLeg1}) becomes
\begin{equation}
z_{u}-yx_{u}=0. \label{defLeg2}
\end{equation}
It follows that $|\gamma_{u}|_{{}_{g}}=\sqrt{x_{u}^{2}+y_{u}^{2}}$, and the unit tangent vector $T=\gamma_{u}/|\gamma_{u}|_{{}_{g}}$ is given by
\[
T=\frac{x_{u}X+y_{u}Y}{\sqrt{x_{u}^{2}+y_{u}^{2}}}.
\]
Defining a vector field $N$ along $\gamma$ by
\[
N=\frac{-y_{u}X+x_{u}Y}{\sqrt{x_{u}^{2}+y_{u}^{2}}},
\]
we have an orthonormal basis $\{T,N,\xi|_{\gamma}\}$ for ${\mathbb{R}}^{3}$ along $\gamma$.

\begin{lemma}
\[
\nabla_{T}T=\kappa N,\quad\text{where}\quad\kappa=\frac{x_{u}y_{uu} -y_{u}x_{uu}}{(x_{u}^{2}+y_{u}^{2})^{3/2}}.
\]
\end{lemma}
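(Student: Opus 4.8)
The plan is to compute $\nabla_T T$ directly from the formula $T = v^{-1}(x_u X + y_u Y)$, where I write $v = \sqrt{x_u^2+y_u^2} = |\gamma_u|_{{}_g}$, using covariant differentiation along $\gamma$ together with the identities (\ref{cov}). The point to exploit at the outset is that the Legendrian condition (\ref{defLeg2}) forces $\gamma_u$ to have no Reeb component: since $\partial_x = X - yZ$, one has $\gamma_u = x_u X + y_u Y + (z_u - y x_u)Z = x_u X + y_u Y$, the last term vanishing by (\ref{defLeg2}).

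First I would expand $\nabla_{\gamma_u}(x_u X + y_u Y) = x_{uu}X + y_{uu}Y + x_u\nabla_{\gamma_u}X + y_u\nabla_{\gamma_u}Y$, where $\nabla_{\gamma_u}$ denotes covariant differentiation in $u$ along the curve. Substituting $\gamma_u = x_u X + y_u Y$ and invoking (\ref{cov}) — in particular $\nabla_X X = \nabla_Y Y = 0$, $\nabla_Y X = Z/2$, $\nabla_X Y = -Z/2$ — gives $\nabla_{\gamma_u}X = (y_u/2)Z$ and $\nabla_{\gamma_u}Y = -(x_u/2)Z$, so that $x_u\nabla_{\gamma_u}X + y_u\nabla_{\gamma_u}Y = \tfrac12(x_u y_u - y_u x_u)Z = 0$. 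Hence $\nabla_{\gamma_u}(x_u X + y_u Y) = x_{uu}X + y_{uu}Y$; in particular it has no $\xi$-component, and this is the only place the Legendrian hypothesis is used.

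Next I would apply the product rule to $T = v^{-1}(x_u X + y_u Y)$, obtaining $\nabla_T T = v^{-1}\nabla_{\gamma_u}\bigl(v^{-1}(x_u X + y_u Y)\bigr) = (v^{-1})_u\,T + v^{-2}(x_{uu}X + y_{uu}Y)$. Decomposing $x_{uu}X + y_{uu}Y$ against the orthonormal frame $\{T,N,\xi|_\gamma\}$, its $N$-component is $v^{-1}(x_u y_{uu} - y_u x_{uu})$ and its $T$-component is $v^{-1}(x_u x_{uu} + y_u y_{uu})$. Since $v_u = v^{-1}(x_u x_{uu} + y_u y_{uu})$, one has $(v^{-1})_u = -v^{-3}(x_u x_{uu} + y_u y_{uu})$, so the tangential contributions cancel and $\nabla_T T = v^{-3}(x_u y_{uu} - y_u x_{uu})\,N = \kappa N$ with $\kappa = (x_u y_{uu} - y_u x_{uu})/(x_u^2+y_u^2)^{3/2}$, as claimed.

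I do not expect a genuine obstacle here; the computation is routine once one notices that the Legendrian condition makes $\gamma_u = x_u X + y_u Y$, so that the "extra" terms coming from $\nabla X$ and $\nabla Y$ cancel and $\nabla_T T$ reduces to the curvature vector of the planar projection $u \mapsto (x(u),y(u))$. The only mild care needed is with the signs in (\ref{cov}) and with verifying the cancellation of the tangential part, which is just the familiar statement that reparametrizing by arc length annihilates the $T$-component of $\nabla_{\gamma_u}T$.
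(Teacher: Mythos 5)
Your proof is correct and follows essentially the same route as the paper: a direct computation of $\nabla_T T$ in the frame $\{X,Y\}$ using the product rule and the identities (\ref{cov}), with the key observation that the frame-derivative terms $x_u\nabla_{\gamma_u}X + y_u\nabla_{\gamma_u}Y$ cancel, leaving the curvature vector of the planar projection. The only cosmetic difference is that you differentiate $\gamma_u = x_u X + y_u Y$ first and then handle the normalization by a tangential/normal decomposition, whereas the paper differentiates the normalized coefficients $x_u/\sqrt{x_u^2+y_u^2}$ directly; your explicit use of the Legendrian condition to eliminate the Reeb component of $\gamma_u$ is exactly the fact the paper records just before the lemma.
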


\begin{proof}
\begin{equation}
\begin{aligned}
\nabla_T T & = \frac{ \frac{d}{du} ( \frac{x_u}{\sqrt{x_u^2 + y_u^2}} ) X  + \frac{d}{du} ( \frac{y_u}{\sqrt{x_u^2 + y_u^2}} ) Y }{ \sqrt{x_u^2 + y_u^2} }  \,  + \,  \frac{x_u \nabla_T X + y_u \nabla_T Y}{\sqrt{x_u^2 + y_u^2}} \nonumber \\ 
& = \frac{ x_u y_{uu} - y_u x_{uu} }{ (x_u^2 + y_u^2)^{3/2} } N  \,+  \,  0 \nonumber \\ 
& = \kappa N, \nonumber 
\end{aligned}
\end{equation}
where $\kappa=\frac{x_{u}y_{uu}-y_{u}x_{uu}}{(x_{u}^{2}+y_{u}^{2})^{3/2}}$ and we have used the identities (\ref{cov}).
\end{proof}

\smallskip

\begin{definition}
For a Legendrian curve $\gamma(u)$, we say that $\lambda(u): S^{1}\rightarrow S^{1}$ is a \textbf{Legendrian angle function} if
\[
d\lambda\left(  \frac{\partial_{u}}{\sqrt{x_{u}^{2}+y_{u}^{2}}}\right) =\kappa(\gamma(u))\partial_{\theta},
\]
where $u$ and $\theta$ are coordinates on $S^{1}.$
\end{definition}

It follows that if a Legendrian curve $\gamma(u): S^{1}\rightarrow {\mathbb{R}}^{3}$ satisfies
\[
\int_{S^{1}}\kappa\sqrt{x_{u}^{2}+y_{u}^{2}}du=0,
\]
then any Legendrian angle $\lambda(u)$ can be lifted to a single-valued function, which we use interchangeably with a corresponding real valued function $\operatorname{Im}\log\lambda(u)$.

\begin{definition}
A family of Legendrian curves $\gamma(t,u):I\times S^{1}\rightarrow{\mathbb{R}}^{3}$ satisfying $\int_{\gamma}\kappa=0$ is a solution to the \textbf{Legendrian curve shortening flow} if
\begin{equation}
\gamma_{t}=\kappa N+\lambda\xi|_{\gamma},\label{lcsf}
\end{equation}
where $\lambda(t,u)$ is a family of (real, single-valued) Legendrian angle functions.
\end{definition}

Now as we have not specified $\lambda$, we note that given an initial datum $\gamma_{0}:S^{1}\rightarrow{\mathbb{R}}^{3}$ there are many possible choices of Legendrian angle function. In fact, the family of curves $\left\{  \gamma-(0,0,f(t)) \right\}  $ is a solution to the flow for any choice of function $f(t)$. If we would like to specify a unique flow, we can simply insist that, say, $z(t,0)=z(0,0)$. With this in mind, we say that $\gamma(t,u):I\times S^{1}\rightarrow{\mathbb{R}}^{3}$ is a solution to the \textbf{normalized Legendrian curve shortening flow} if
\begin{align}
\gamma_{t} &  =\kappa N+\lambda\xi|_{\gamma}, \label{nlcsf} \\
z(t,0) &  =z(0,0).\label{nlcsf2}
\end{align}

The following result provides a structure for flows of Legendrian curves.
\begin{lemma}
\label{Lemma1} Suppose that $\gamma_{0}:S^{1}\rightarrow{\mathbb{R}}^{3}$ is Legendrian and the family of curves $\gamma(t,u):I\times S^{1}\rightarrow{\mathbb{R}}$ satisfies
\[
\gamma_{t}=\frac{1}{\left\vert \partial u\right\vert _{g}}f_{u}N+f\xi
|_{\gamma},
\]
with initial datum $\gamma_0$. Then $\gamma\left(t,\cdot\right)$ is Legendrian for all $t \in I$.
\end{lemma}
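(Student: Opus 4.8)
The plan is to show that the quantity $\phi := \gamma^*(\eta) = z_u - y x_u$ (as a function of $t$ and $u$) satisfies a linear evolution equation with no zeroth-order-free term, so that $\phi \equiv 0$ is propagated from the initial condition $\phi(0,\cdot)=0$. Concretely, I would compute $\partial_t \phi = \partial_t(z_u - y x_u) = \partial_u(z_t) - y_t x_u - y \,\partial_u(x_t)$, using that $\partial_t$ and $\partial_u$ commute. So the first task is to extract the three scalar components $x_t, y_t, z_t$ from the vector equation $\gamma_t = |\partial u|_g^{-1} f_u N + f\,\xi|_\gamma$.

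To do this, I expand $N = (-y_u X + x_u Y)/\sqrt{x_u^2+y_u^2}$ and $\xi = Z = \partial_z$ in the coordinate frame $\partial_x, \partial_y, \partial_z$, remembering that $X = \partial_x + y\partial_z$ and $Y = \partial_y$. Thus $N = \frac{1}{\sqrt{x_u^2+y_u^2}}\left(-y_u\,\partial_x + x_u\,\partial_y - y\,y_u\,\partial_z\right)$, and with $|\partial u|_g = \sqrt{x_u^2+y_u^2}$ on a Legendrian curve (or more carefully, on a curve one is trying to show is Legendrian — here I would either note the computation is purely formal in the scalar components, or carry $\phi$ along and observe $|\partial u|_g^2 = x_u^2+y_u^2+\phi^2$, the $\phi^2$ terms being harmless since we will find $\partial_t\phi$ vanishes when $\phi=0$). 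This gives, writing $\rho = \sqrt{x_u^2+y_u^2}$,
\[
x_t = -\frac{f_u\, y_u}{\rho^2},\qquad y_t = \frac{f_u\, x_u}{\rho^2},\qquad z_t = -\frac{f_u\, y\, y_u}{\rho^2} + f.
\]

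Now I substitute into $\partial_t\phi = \partial_u(z_t) - y_t x_u - y\,\partial_u(x_t)$. The key cancellation: $\partial_u(z_t) = \partial_u\!\left(-\tfrac{f_u y y_u}{\rho^2}\right) + f_u$, and $-y\,\partial_u(x_t) = y\,\partial_u\!\left(\tfrac{f_u y_u}{\rho^2}\right)$; combining, the terms involving $\partial_u$ of $\tfrac{f_u y_u}{\rho^2}$ pair up so that $\partial_u(z_t) - y\,\partial_u(x_t) = f_u - y_u \cdot \tfrac{f_u y_u}{\rho^2} = f_u - \tfrac{f_u y_u^2}{\rho^2} = \tfrac{f_u x_u^2}{\rho^2}$. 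Meanwhile $y_t x_u = \tfrac{f_u x_u^2}{\rho^2}$, so $\partial_t\phi = \tfrac{f_u x_u^2}{\rho^2} - \tfrac{f_u x_u^2}{\rho^2} = 0$ — at least along points where $\phi = 0$. The honest version keeps $\phi$ general and produces $\partial_t \phi = \phi \cdot (\text{smooth coefficient})$, a linear ODE in $t$ for each fixed $u$ (after noting $f$ and its derivatives depend smoothly on the curve), whence $\phi(0,u) = 0$ forces $\phi(t,u)=0$ for all $t \in I$.

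The main obstacle is bookkeeping rather than conceptual: one must be careful that $f = f(t,u)$ is, by hypothesis, built as a Legendrian angle function relative to $\gamma(t,\cdot)$, so its derivatives are genuinely available, and one must handle the factor $|\partial u|_g$ cleanly — either by restricting attention to the scalar identity and invoking short-time existence of the PDE system for $(x,y,z)$, or by carrying the $\phi^2$ correction in $|\partial u|_g^2 = \rho^2 + \phi^2$ and checking it contributes only terms proportional to $\phi$. I would write the final step as: $\phi$ solves $\partial_t \phi = a(t,u)\,\phi$ with $a$ continuous on $I\times S^1$, and $\phi(0,\cdot)\equiv 0$, so by uniqueness for linear ODEs $\phi \equiv 0$, i.e. $\gamma(t,\cdot)$ is Legendrian for all $t \in I$. $\qed$
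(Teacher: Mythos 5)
Your proposal is correct and follows essentially the same route as the paper: expand $N$ and $\xi$ in the coordinate frame to read off $x_t,y_t,z_t$, compute $\partial_t(z_u-yx_u)$, observe the cancellation leaving $f_u-\phi\sqrt{x_u^2+y_u^2}$, and conclude that the choice $\phi=f_u/|\partial u|_g$ preserves $\gamma^*(\eta)=0$. Your extra care in tracking the difference between $|\partial u|_g$ and $\sqrt{x_u^2+y_u^2}$ off the Legendrian locus, and in closing the argument with ODE uniqueness for $\partial_t\phi=a\,\phi$, is a refinement the paper's proof leaves implicit, but the substance is identical.
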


\begin{proof}
Let $\gamma(t,u)=(x(t,u),y(t,u),z(t,u))$ be a family of curves, and suppose that the normal variation satisfies
\[
\gamma_{t}= \phi N+ f \xi|_{\gamma}.
\]
Then
\begin{align*}
\gamma_{t}  & = \phi \left(  \frac{-y_{u} X + x_{u} Y}{\sqrt{x_{u}^{2}+y_{u}^{2}}} \right) + f \partial_{z} \\
& = -\phi \frac{y_{u}}{\sqrt{x_{u}^{2}+y_{u}^{2}}} \partial_{x} + \phi \frac{x_{u}}{\sqrt{x_{u}^{2}+y_{u}^{2}}} \partial_{y} + \left( f - \phi y \frac{y_{u}}{\sqrt{x_{u}^{2}+y_{u}^{2}}} \right) \partial_{z}.
\end{align*}
We compute
\begin{align*}
\frac{\partial}{\partial t}\gamma^{\ast}(\eta)  & =\frac{\partial}{\partial t} \left( z_{u}-yx_{u} \right)  \\
& =\frac{\partial}{\partial u}\frac{\partial z}{\partial t}-\frac{\partial y}{\partial t}\frac{\partial x}{\partial u}-y\frac{\partial}{\partial u} \frac{\partial x}{\partial t} \\
& =\frac{\partial}{\partial u}\left(f - \phi y \frac{y_{u}}{\sqrt{x_{u}^{2}+y_{u}^{2}}} \right)  - \left( \phi \frac{ x_{u} }{\sqrt{x_{u}^{2}+y_{u}^{2}}} \right) \frac{\partial x}{\partial u}  + y \frac{\partial}{\partial u} \left( \phi \frac{y_{u}}{\sqrt{x_{u}^{2}+y_{u}^{2}}} \right)  \\
& = \frac{\partial}{\partial u} f  - \phi \frac{y_{u}^2}{\sqrt{x_{u}^{2}+y_{u}^{2}}} -  y \frac{\partial}{\partial u} \left( \phi \frac{ y_{u} }{\sqrt{x_{u}^{2}+y_{u}^{2}}} \right)  - \phi \frac{ x_{u}^2 }{\sqrt{x_{u}^{2}+y_{u}^{2}}} + y \frac{\partial}{\partial u} \left( \phi \frac{y_{u}}{\sqrt{x_{u}^{2}+y_{u}^{2}}} \right)  \\
& = \frac{\partial}{\partial u} f - \phi \sqrt{x_{u}^{2}+y_{u}^{2}}.
\end{align*}
Thus, the Legendrian condition is preserved if and only if
\[
\phi = \frac{1}{|\partial u|_g}  f_u.
\]
\end{proof}

The following result shows how Legendrian curve shortening flow in ${\mathbb{R}}^{3}$ projects onto curve shortening flow in ${\mathbb{R}}^{2}$. 

\begin{proposition}
Suppose that $\gamma(t,u):I\times S^{1}\rightarrow{\mathbb{R}}^{3}$ is a solution to the Legendrian curve shortening flow. Then the projection
\[
\bar{\gamma}(t,u):=(x(t,u),y(t,u))
\]
is an immersed solution to the curve shortening flow in ${\mathbb{R}}^{2}.$
\end{proposition}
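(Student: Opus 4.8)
The plan is to differentiate the projection $\bar\gamma(t,u)=(x(t,u),y(t,u))$ in time and show that its normal velocity in $\mathbb R^2$ equals the (Euclidean) curvature of $\bar\gamma$, which is the geometric curve shortening flow. First I would record that for a Legendrian curve the metric $g$ restricted to $\gamma$ has $|\gamma_u|_g=\sqrt{x_u^2+y_u^2}=|\bar\gamma_u|_{\mathrm{eucl}}$, and the unit tangent $T$ and normal $N$ project to the Euclidean unit tangent $\bar T=(x_u,y_u)/\sqrt{x_u^2+y_u^2}$ and Euclidean unit normal $\bar N=(-y_u,x_u)/\sqrt{x_u^2+y_u^2}$ of $\bar\gamma$; moreover the quantity $\kappa=(x_uy_{uu}-y_ux_{uu})/(x_u^2+y_u^2)^{3/2}$ appearing in the lemma is exactly the signed Euclidean curvature of $\bar\gamma$. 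So $\bar\gamma$ is indeed immersed (since $|\bar\gamma_u|=|\gamma_u|_g>0$ for a regular curve), and its scalar curvature agrees with the $\kappa$ of the flow equation.

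Next I would project the flow equation \eqref{lcsf}. Writing $\xi=\partial_z$, the term $\lambda\xi|_\gamma$ has zero $\partial_x,\partial_y$ components, so under the projection $\pi(x,y,z)=(x,y)$ it contributes nothing. Since $\pi(N)=\pi\bigl((-y_uX+x_uY)/\sqrt{x_u^2+y_u^2}\bigr)=(-y_u,x_u)/\sqrt{x_u^2+y_u^2}=\bar N$ (using $X=\partial_x+y\partial_z$, $Y=\partial_y$, so $\pi(X)=\partial_x$, $\pi(Y)=\partial_y$), we get $\bar\gamma_t=\pi(\gamma_t)=\kappa\,\bar N$. Thus $\bar\gamma$ evolves with normal speed $\kappa$, which is precisely its Euclidean curvature; that is, $\bar\gamma_t=\kappa\bar N$ is (a parametrization of) the curve shortening flow in $\mathbb R^2$.

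The one point that needs a little care, and which I expect to be the main (mild) obstacle, is the bookkeeping of the tangential component. The flow \eqref{lcsf} is stated with purely normal plus Reeb velocity in $\mathbb R^3$, but after projection one should check there is no spurious tangential drift that would spoil the identification with the geometric flow, or equivalently observe that curve shortening flow is a geometric flow so tangential reparametrizations are irrelevant. I would phrase the conclusion as: the projected family satisfies $\bar\gamma_t\cdot\bar N=\kappa$, which characterizes the curve shortening flow up to tangential diffeomorphisms, hence $\bar\gamma$ is a solution to curve shortening flow in $\mathbb R^2$. A secondary point is to confirm $\bar\gamma$ stays immersed along the flow: this follows because the flow preserves the Legendrian/regularity condition on $\gamma$ (short-time existence plus the fact that $|\bar\gamma_u|=|\gamma_u|_g$), though for the statement as given it suffices to note immersedness is inherited pointwise in $u$ from $\gamma$.
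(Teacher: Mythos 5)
Your proposal is correct and follows essentially the same route as the paper: project the flow equation, note that the Reeb term has no $\partial_x,\partial_y$ components, and observe that $\kappa\,\bar N$ with $\kappa=(x_uy_{uu}-y_ux_{uu})/(x_u^2+y_u^2)^{3/2}$ is exactly the Euclidean curvature vector of $\bar\gamma$. The tangential-drift worry you raise resolves itself, since the projected velocity $\kappa\bar N$ is already purely normal.
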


\begin{proof}
If $\gamma$ is a solution to (\ref{lcsf}), then
\begin{equation}
x_{t}=\kappa\frac{-y_{u}}{\sqrt{x_{u}^{2}+y_{u}^{2}}},\qquad y_{t}=\kappa \frac{x_{u}}{\sqrt{x_{u}^{2}+y_{u}^{2}}}. \label{projflow}
\end{equation}
As $\kappa$ is the expression for curvature in ${\mathbb{R}}^{2}$, this is precisely curve shortening flow.
\end{proof}

With this observation, we have the following existence and uniqueness result for (\ref{nlcsf})(\ref{nlcsf2}).

\begin{theorem}
\label{lcsf:exists} Given a Legendrian curve $\gamma_{0}:S^{1}\rightarrow {\mathbb{R}}^{3}$ such that $\kappa_{0}$ is H\"{o}lder continuous and $\int_{\gamma_{0}}\kappa_{0}=0$, there exists a unique maximal solution $\gamma:[0,t_{max})\times S^{1}\rightarrow\mathbb{R}$ to the normalized Legendrian curve shortening flow starting at $\gamma_{0}$, which exists as long as a H\"{o}lder norm of $\kappa(t)$ remains bounded. Moreover, we have the following properties:

\begin{enumerate}
\item[(i.)] The lifespan $t_{max}$ of the solution is finite, and $\kappa(t)$ becomes unbounded as $t\rightarrow t_{max}$.

\item[(ii.)] For $t>0$, the curve $\gamma(t)$ is real analytic.

\item[(iii.)] As $t\rightarrow t_{max}$, the curves $\gamma(t)$ converge uniformly to a continuous map $\gamma^{\ast}:S^{1}\rightarrow\mathbb{R}^{3}$, which is a piecewise $C^{1}$ curve with a finite number of singular points. Away from these singular points, the limit curve $\gamma^{\ast}$ is real analytic.
\end{enumerate}
\end{theorem}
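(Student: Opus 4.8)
The plan is to reduce the normalized Legendrian flow to curve shortening flow on the projected curve $\bar\gamma$, invoke the now-classical structure of curve shortening flow for immersed curves, and then lift everything back up via the Legendrian condition. First I would establish short-time existence and uniqueness: given $\gamma_0$ Legendrian with $\kappa_0$ H\"older and $\int_{\gamma_0}\kappa_0 = 0$, the projected curve $\bar\gamma_0$ is an immersed planar curve, and Proposition~\ref{...} together with Lemma~\ref{Lemma1} tells us that solving curve shortening flow for $\bar\gamma_0$ in $\R^2$ produces a unique planar solution $\bar\gamma(t,u)$; one then recovers $z(t,u)$ by integrating the Legendrian relation $z_u = y x_u$ along $u$, with the single constant of integration in $u$ fixed in $t$ by the normalization $z(t,0) = z(0,0)$. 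Regularity theory for quasilinear parabolic equations (applied to the curvature equation $\partial_t \kappa = \kappa_{ss} + \kappa^3$ along the flow, or directly to the graph/support-function formulation) gives instantaneous smoothing, so for $t > 0$ the planar curve---and hence $\kappa(t)$, and hence by integration $z(t,\cdot)$---is real analytic, which is claim (ii.). The maximal time $t_{max}$ is exactly the maximal time of the planar flow, and the continuation criterion ``as long as a H\"older norm of $\kappa$ stays bounded'' follows from the standard fact that curve shortening flow extends past any time at which curvature remains bounded (e.g.\ by the $\Image$/bootstrapping argument of Gage--Hamilton and Grayson, or Angenent's analyticity estimates).

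For property (i.), finiteness of $t_{max}$: the key point is that $\bar\gamma_0$ is a closed immersed curve, so it encloses a finite total signed area $A_0 = \tfrac12\int(x\,dy - y\,dx)$, and under curve shortening flow $\frac{d}{dt}A = -\int_{\bar\gamma}\kappa\,ds \cdot(\text{turning contribution})$---more precisely $\frac{dA}{dt} = -2\pi m$ where $m$ is the (constant) turning number, unless $m=0$ in which case one uses instead that the length $L(t)$ satisfies $\frac{dL}{dt} = -\int\kappa^2\,ds \le 0$ together with the isoperimetric-type or Gage--Hamilton monotonicity forcing $L \to 0$ or curvature to blow up in finite time. In either case a closed immersed curve cannot flow smoothly for all time: if $\kappa$ stayed bounded on $[0,\infty)$ the curve would converge subsequentially to a smooth closed geodesic in $\R^2$, i.e.\ a point, contradicting smoothness; hence $\kappa(t)$ becomes unbounded as $t \to t_{max} < \infty$. (Here I would cite Abresch--Langer / Angenent / Grayson for the classification of singularities of immersed planar curve shortening flow.)

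For property (iii.), the limit at $t_{max}$: Angenent's theory of the curve shortening flow for immersed curves shows that the number of self-intersections is non-increasing and, more to the point, that as $t \to t_{max}$ the planar curves $\bar\gamma(t,\cdot)$ converge uniformly to a limit map $\bar\gamma^*:S^1 \to \R^2$ which is piecewise $C^1$ (in fact piecewise analytic) with only finitely many singular points, where finitely many ``arcs'' collapse; this is where the finite number of singular points comes from. Lifting: since $z(t,u) = z(0,0) + \int_0^u y(t,v)\,x_v(t,v)\,dv$ and the planar curves converge uniformly with uniformly bounded length, the $z$-coordinate converges uniformly as well, so $\gamma(t,\cdot) \to \gamma^*:S^1 \to \R^3$ uniformly, and away from the finitely many singular points the convergence is in $C^\infty$ (indeed real-analytic) by interior parabolic regularity, with $\gamma^*$ there satisfying $z_u = y x_u$, hence piecewise $C^1$.

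The main obstacle I anticipate is the $m = 0$ (zero turning number) case relevant to figure-eight curves: the area argument degenerates, so finiteness of $t_{max}$ and the sharp structure of the limit both have to be extracted from the more delicate monotonicity and intersection-number machinery (Angenent, Grayson) rather than from elementary estimates, and one must be careful that the continuation criterion is genuinely controlled by a H\"older norm of $\kappa$ and not just by $\sup|\kappa|$---this requires the parabolic Schauder bootstrap starting from the evolution equation for $\kappa$. Everything else is a matter of transferring known planar results through the (globally defined, as the paper notes non-unique-up-to-vertical-translation but normalized here) correspondence with Legendrian curves.
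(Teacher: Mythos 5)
Your overall strategy is exactly the paper's: project to the plane, invoke Angenent's theory for immersed planar curve shortening flow to get existence, uniqueness, analyticity, finite lifespan, and the structure of the limit, then lift back by integrating $z_u = y x_u$ with the constant fixed by $z(t,0)=z(0,0)$. Properties (i.)--(iii.) are, as you say, transferred wholesale from the planar theory, and the paper does no more there than cite Angenent.

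There is, however, one genuine gap in your lifting step, and it is precisely the point where most of the paper's proof is spent. Defining $z(t,u) = z(0,0) + \int_0^u y(t,v)\,x_v(t,v)\,dv$ only gives a function on $S^1$ if $\oint y\,x_u\,du = 0$ for every $t$, i.e.\ if the projected curve encloses zero signed area at all times; otherwise $z(t,\cdot)$ fails to be $2\pi$-periodic and the ``lift'' is not a closed curve. The paper establishes this in two steps: first, $\frac{d}{dt}\int_{\bar\gamma}\kappa\,ds = 0$, so the turning number stays zero (this is where the hypothesis $\int_{\gamma_0}\kappa_0 = 0$ enters); second, $\frac{d}{dt}A = -\int_{\bar\gamma}\kappa\,ds = 0$, so the signed area is constant, and it is zero initially because the Legendrian condition gives $\oint y_0 (x_0)_u\,du = \oint (z_0)_u\,du = 0$. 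You invoke $\frac{dA}{dt}=-2\pi m$ only in your discussion of finiteness of $t_{max}$ and never connect it to the well-definedness of $z$, which is where it is actually indispensable. A secondary, smaller omission: having built the lift, one must still verify it solves $\gamma_t = \kappa N + \lambda\xi|_\gamma$ for an actual Legendrian angle function; the paper does this by computing $\gamma_t$ and exhibiting $\lambda(t,u) = -y(t,0)x_t(t,0) + \int_0^u \kappa\,ds$ explicitly, whereas you leave this implicit in the appeal to Lemma~\ref{Lemma1}, which runs in the opposite logical direction (from a prescribed variation to preservation of the Legendrian condition).
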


\begin{proof}
The proof of this theorem follows from identifying solutions to the Legendrian curve shortening flow in $\mathbb{R}^{3}$ with solutions to the curve shortening flow in $\mathbb{R}^{2}$. 

Let $\gamma_{0}:S^{1}\rightarrow {\mathbb{R}}^{3}$ be a Legendrian curve with with coordinate $u$: $\gamma
_{0}=\left(  x_{0}(u),y_{0}(u),z_{0}(u)\right)  $ such that $\kappa_{0}$ is H\"{o}lder continuous and $\int_{\gamma_{0}}\kappa_{0}=0$, where
\[
\kappa_{0}=\frac{(x_{0})_{u}(y_{0})_{uu}-(y_{0})_{u}(x_{0})_{uu}}{((x_{0})_{u}^{2}+(y+0)_{u}^{2})^{3/2}}.
\]
Consider the projection of $\gamma_{0}$ onto its first two coordinates:
\[
\bar{\gamma}_{0}(u)=(x_{0}(u),y_{0}(u)),
\]
where we use the bar notation to denote projection onto the first two coordinates. The curvature vector of $\bar{\gamma}_{0}(u)$ in $\mathbb{R}^{2}$ is given by
\[
\overrightarrow{\kappa_0} = \kappa_{0} \mathbf{n},
\]
where
\[
\mathbf{n}=\frac{(-(y_{0})_{u},(x_{0})_{u})}{\sqrt{(x_{0})_{u}^{2}+(y_0)_{u}^{2}}}.
\]
We let $\bar{\gamma}$ denote the unique maximal solution to the curve shortening flow starting at $\bar{\gamma}_{0}$ (see Angenent \cite{A22} \cite{A2}). Then $\bar{\gamma}$ has all the properties stated in the theorem. 

Differentiating the integral $\int_{\bar{\gamma}}\kappa$, we have
\begin{equation}
\begin{aligned} \frac{d}{dt} \int_0^{2 \pi} \kappa \sqrt{x_u^2 + y_u^2} du & = \int_0^{2 \pi} \left[ \left(\kappa^3 + \frac{\partial ^2 \kappa}{\partial s^2} \right) \sqrt{x_u^2 + y_u^2} + \kappa \left( -\kappa^2 \sqrt{x_u^2 + y_u^2} \right) \right] du \\ 
& = \int_0^{2 \pi} \frac{\partial ^2 \kappa}{\partial s^2} \sqrt{x_u^2 + y_u^2} \, du \\ 
& = \int_0^{2 \pi} \frac{\partial}{\partial u} \left( \frac{\partial \kappa}{\partial s} \right) du \, = \, 0. \end{aligned}\nonumber
\end{equation}
Since, the integral $\int_{\bar{\gamma_{0}}}\kappa_{0}$ was assumed to be zero, we see that $\int_{\bar{\gamma}}\kappa=0$ for all $t\geq0$. 

We observe that (\ref{projflow}) implies
\begin{align}
(yx_{u})_{t}  &  =\left(  yx_{t}\right)  _{u}+y_{t}x_{u}-y_{u}x_{t} \label{forparts1} \\
&  = (  yx_{t} )_{u}+\kappa\sqrt{x_{u}^{2}+y_{u}^{2}}. \nonumber
\end{align}
Differentiating the integral for enclosed signed area $A=-\int_{0}^{2\pi}yx_{u}du$, we have
\begin{equation}
\begin{aligned} 
\frac{d}{dt} A & = - \int_0^{2 \pi} (y x_{u})_{t} \, du \\ 
& = - \int_0^{2 \pi}  \left( ( yx_{t} )_{u}+\kappa\sqrt{x_{u}^{2}+y_{u}^{2}} \, \right) du \\
& = - y x_t \big\vert_{u=0}^{2\pi} - \int_0^{2 \pi} \kappa \sqrt{x_u^2 + y_u^2} \, du \, = \, 0,
\end{aligned}\nonumber
\end{equation}
where we used (\ref{forparts1}) in the second equality. Therefore, the enclosed signed area $A$ is constant under the flow. Furthermore, since $\gamma_{0}$ is Legendrian, we have $y_0(x_0)_{u} = (z_0)_u$, and it follows that
\begin{equation}
A=-\int_{0}^{2\pi} y_0(x_0)_{u}du=-\int_{0}^{2\pi}(z_0)_{u}du=0. \nonumber
\end{equation}
Thus, each curve $\bar{\gamma}(t)$ encloses zero signed area.

For $t \geq 0$, we define a height function $h(t,u)$ by
\begin{equation}
h(t,u)=z_0(0)+\int_{0}^{u}y(t,w)x_{w}(t,w)dw. \label{defineh}
\end{equation}
By the previous paragraph, each curve $\bar{\gamma}$ encloses zero signed area, so $h(t)$ is a $2\pi$-periodic function in the variable $u$. Using (\ref{forparts1}), we have
\begin{equation}
h_{t}=y(t,u)x_{t}(t,u)-y(t,0)x_{t}(t,0)+\int_{0}^{u}\kappa\sqrt{x_{w}^{2}+y_{w}^{2}}dw. \label{height_derivative}
\end{equation}
We also note that
\begin{align}
h(0,u) & = z_0(u), \label{itslegendrian0} \\
h(t,0) & = h(0,0), \label{itslegendrian1} \\
h_{u}(t,u)  & = y(t,u)x_{u}(t,u). \label{itslegendrian2}
\end{align}

To see the existence part of the theorem, define a family of curves $\gamma:[0,t_{max})\times S^{1}\rightarrow\mathbb{R}^{3}$ by
\begin{equation}
\gamma(t,u)=\left(  \bar{\gamma}(t,u),h(t,u)\right). \label{definesolution}
\end{equation}
It follows from (\ref{itslegendrian0})--(\ref{itslegendrian2}) that
\begin{align*}
\gamma(0,u) & = \gamma_0(u),  \\
z(t,0) & = z(0,0),  \\
z_u(t,u) & = y(t,u) x_u(t,u).
\end{align*}
Hence $\gamma(t,u)$ is a family of Legendrian curves with initial datum $\gamma_0$ and normalization $z(t,0) = z(0,0)$.

Using (\ref{height_derivative}) and the assumption that $\bar{\gamma}$ is a solution to the curve shortening flow, we have
\begin{align}
\gamma_t & = \left(  \,\kappa\frac{-y_{u}}{\sqrt{x_{w}^{2}+y_{w}^{2}}},\,\kappa\frac{x_{u}}{\sqrt{x_{w}^{2}+y_{w}^{2}}},\,y(t,u)x_{t}(t,u)\,\right) \label{gamma_derivative} \\
&  \qquad + \left(  \,0,0,\,-y(t,0)x_{t}(t,0)+\int_{0}^{u}\kappa\sqrt{x_{w}^{2}+y_{w}^{2}}dw\right) . \nonumber
\end{align}
Introducing a family of Legendrian angle functions
\begin{equation}
\lambda(t,u)=-y(t,0)x_{t}(t,0)+\int_{0}^{u}\kappa\sqrt{x_{w}^{2}+y_{w}^{2}}dw \label{definelam}
\end{equation}
and using (\ref{gamma_derivative}) and (\ref{projflow}), we compute $$\gamma_t = \kappa N + \lambda\xi|_{\gamma}.$$ Thus, the family of curves $\gamma(t,u)$ defined in  (\ref{definesolution}) is a solution to the normalized Legendrian curve shortening flow starting at $\gamma_{0}$ with the properties stated in the theorem.

To see that the solution is unique, notice that $x(t,u)$ and $y(t,u)$ are unique since the projection $\bar{\gamma}(t,u)$ is a solution to the curve shortening flow. The uniqueness of $z(t,u)$ follows from the equation $z_{u}=yx_{u}$ and the normalization $z(t,0)=z(0,0)$.
\end{proof}

The proof of this theorem shows there is a correspondence between solutions to the Legendrian curve shortening flow in $\mathbb{R}^{3}$ with $\int_{\gamma} \kappa= 0$ and solutions to the curve shortening flow in $\mathbb{R}^{2}$ with zero winding number that enclose zero signed area.

\bigskip

\section{Curve shortening flow in $\mathbb{R}^{2}$ for balanced figure-eights}

Let $\gamma:[0,t_{max})\times S^{1}\rightarrow\mathbb{R}^{2}$ be a solution to the curve shortening flow:
\[
\gamma_{t}=\overrightarrow{\kappa}.
\]
We assume that the initial curve $\gamma_{0}(u)=\gamma(0,u)$ has the following properties:

\begin{enumerate}
\item[(I.)] The curve $\gamma_{0}$ has winding number zero: $\int_{\gamma_{0}} \kappa_{0} = 0$.

\item[(II.)] The curve $\gamma_{0}$ encloses zero signed area: $-\int_{0}^{2\pi} y x_{u} du =0$.

\item[(III.)] The curve $\gamma_{0}$ has exactly one self-intersection.
\end{enumerate}

\begin{definition}
A closed, plane curve is called a \textbf{balanced figure-eight} it satisfies properties (I.)-(III.) above. 
\end{definition}

The proof of Theorem~\ref{lcsf:exists} shows that the first two of these properties are preserved together under the curve shortening flow. Angenent \cite{A3} \cite{A1} showed that the number of self-intersections is
non-increasing under the curve shortening flow. For a closed plane curve to satisfy the first property, it must have at least one self-intersection. Thus, all three properties are preserved together under the curve shortening flow, and a balanced figure-eight remains a balanced figure-eight until its curvature becomes unbounded at time $t=t_{max}$.

Let $|A|$ denote the total area enclosed by the curve.

\begin{lemma}
\cite[Lemma 2]{G2} For a balanced figure-eight under the curve shortening flow,
\[
-4\pi\leq\frac{d|A|}{dt}\leq-2\pi.
\]
\end{lemma}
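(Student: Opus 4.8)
### Proof proposal

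The plan is to track the two natural quantities associated with a balanced figure-eight: the enclosed total area $|A|$ and the total absolute curvature (or, equivalently, the total turning). The sign-zero-area condition (II.) means the curve splits into two ``lobes'' with areas $A_+$ and $A_-$, oriented oppositely, with signed area $A_+ - A_- = 0$, so $|A| = A_+ + A_- = 2A_+$. The rate of change of a signed lobe area under curve shortening flow is governed by Gauss--Bonnet: for a region bounded by an arc on which the curve turns through total angle $\Theta$ (measured with the correct orientation), $\tfrac{d}{dt}(\text{area}) = -\Theta$. The first step is therefore to make this precise: cut the figure-eight at its unique self-intersection point $p$ into two closed loops $\ell_+$ and $\ell_-$ (each a piecewise-smooth curve with one corner at $p$), express $\tfrac{d}{dt}A_\pm$ in terms of the total curvature of $\ell_\pm$ together with the exterior angle contributed at the corner $p$, and then sum.

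The key computation is the following. For a simple closed smooth curve, $\int \kappa\,ds = 2\pi$ and the area decreases at rate exactly $2\pi$; this is the classical fact behind Grayson's theorem. Here each lobe $\ell_\pm$ is \emph{almost} a simple closed curve: it bounds an embedded region but has a single corner at the self-intersection point $p$, where the two branches of $\gamma$ cross transversally. If the two branches meet at $p$ with angle $\beta \in (0,\pi)$ between the appropriately oriented tangent directions, then the Gauss--Bonnet theorem applied to the embedded lobe region gives
\begin{equation}
\int_{\ell_+}\kappa\,ds + (\pi - \beta) = 2\pi, \qquad \int_{\ell_-}\kappa\,ds + (\pi - (\pi-\beta)) = 2\pi, \nonumber
\end{equation}
since the exterior angles at the crossing are $\pi-\beta$ for one lobe and $\beta$ for the other (they are supplementary). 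Hence $\int_{\ell_+}\kappa\,ds = \pi + \beta$ and $\int_{\ell_-}\kappa\,ds = 2\pi - \beta$. Now invoke the variation-of-area formula along the flow: $\tfrac{d}{dt}A_+ = -\int_{\ell_+}\kappa\,ds = -(\pi+\beta)$ and likewise $\tfrac{d}{dt}A_- = -(2\pi - \beta)$ (one must be careful that the smooth-curve area-derivative formula $\tfrac{dA}{dt} = -\int\kappa\,ds$ is unaffected by the corner, because the corner is just the endpoint identification of a smooth arc and contributes no boundary term — this is essentially the computation already carried out in the proof of Theorem~\ref{lcsf:exists}, where $\tfrac{d}{dt}\int_\gamma \kappa = 0$ and the boundary terms cancelled by periodicity). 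Summing,
\begin{equation}
\frac{d|A|}{dt} = \frac{dA_+}{dt} + \frac{dA_-}{dt} = -(\pi + \beta) - (2\pi - \beta) = -3\pi. \nonumber
\end{equation}

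Wait — this gives a constant $-3\pi$, which is cleaner than the claimed inequality, so the subtlety must be that the self-intersection geometry can be more complicated: the ``lobes'' need not each be embedded, and the crossing angle analysis via Gauss--Bonnet must be replaced by an honest count of total turning. The correct robust statement is that the total turning of the figure-eight is $\int_\gamma \kappa\,ds = 0$ by (I.), but the total \emph{absolute} turning $\int_\gamma |\kappa|\,ds$ controls the area rate; decomposing at $p$ and at the inflection points, and bounding the turning of each embedded piece between $0$ and $2\pi$ (it cannot exceed $2\pi$ since each sub-loop is embedded, by Angenent's preservation of the self-intersection count, and is positive as long as the region is nondegenerate), yields $2\pi \le \int_\gamma|\kappa|\,ds \le 4\pi$, and since $\tfrac{d|A|}{dt} = -\tfrac12\int_\gamma|\kappa|\,ds$ by applying the signed formula lobe-by-lobe with the orientations that make each lobe's contribution negative, we get the stated $-4\pi \le \tfrac{d|A|}{dt} \le -2\pi$. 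The main obstacle is precisely this bookkeeping: correctly identifying the sign of $\kappa$ on each arc between consecutive special points (the crossing $p$ and the two inflection points), applying Gauss--Bonnet on each resulting embedded sub-region to pin its total turning to lie in $[0,2\pi]$ (using embeddedness, which follows from properties (I.)--(III.) being preserved), and verifying that the corner terms cancel in pairs when the two lobe contributions are added. The extremes $-2\pi$ and $-4\pi$ correspond to the degenerate limiting configurations — a ``doubled'' simple loop versus two well-separated round-ish lobes — and one should remark that they are not attained for a genuine balanced figure-eight but serve as the relevant bounds.
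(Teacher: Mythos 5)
Your first computation is the right method and is essentially the paper's proof: the paper simply quotes Grayson's identity $\tfrac{d}{dt}|A|=-2\pi-2\beta$, where $\beta$ is the interior angle at the self-intersection, and this identity is exactly what your lobe-by-lobe Gauss--Bonnet argument produces once one geometric error is fixed. The error is your assumption that the two lobes' exterior angles at the crossing are supplementary. For a one-crossing figure-eight of turning number zero, the interiors of the two lobes occupy \emph{vertically opposite} sectors at the transversal crossing (lobe~1 is bounded near $p$ by the rays $\gamma'(u_1)$ and $-\gamma'(u_2)$, lobe~2 by their negatives), so the two interior angles are equal, both $=\beta\in(0,\pi)$. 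Hence each embedded lobe, traversed counterclockwise, has total signed turning $2\pi-(\pi-\beta)=\pi+\beta$, each lobe's area decreases at rate $\pi+\beta$, and
\[
\frac{d|A|}{dt}=-2(\pi+\beta)=-2\pi-2\beta\in[-4\pi,-2\pi].
\]
The value $-3\pi$ you obtained should have signalled a mis-assignment of the angles, not a failure of the method.

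Unfortunately, the fallback argument you substituted is not correct. The identity $\tfrac{d|A|}{dt}=-\tfrac12\int_\gamma|\kappa|\,ds$ and the bound $\int_\gamma|\kappa|\,ds\le 4\pi$ both fail for a general balanced figure-eight: Gauss--Bonnet pins the \emph{signed} turning of each embedded lobe at $\pi+\beta$, but the \emph{absolute} turning $\int_{\ell_i}|\kappa|\,ds$ exceeds this whenever the lobe is non-convex and can be arbitrarily large; nothing in hypotheses (I.)--(III.) controls it. (Your closing remark that the extreme values are not attained is fine: a transversal crossing forces $\beta\in(0,\pi)$, so in fact $-4\pi<\tfrac{d|A|}{dt}<-2\pi$.) The proof you want is simply your first paragraph with equal, rather than supplementary, interior angles.
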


\begin{proof}
The lemma follows from the equation $$\frac{d}{dt}|A|= -2 \pi \, - \textrm{twice the interior angle at the point of self-intersection}.$$
\end{proof}

\begin{lemma}
For a balanced figure-eight under the curve shortening flow,
\[
\lim_{t \to t_{max} } |A| =0.
\]
\end{lemma}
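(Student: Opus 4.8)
The plan is to combine the previous lemma, which gives $-4\pi \le \frac{d|A|}{dt} \le -2\pi$, with the finiteness of $t_{max}$ from Theorem~\ref{lcsf:exists}(i). Since $|A|$ is a nonnegative quantity that is strictly decreasing along the flow (with derivative bounded above by $-2\pi < 0$), and the flow exists only on the finite interval $[0, t_{max})$, the value $|A|(t)$ must reach $0$ by time $t_{max}$; otherwise one could integrate $\frac{d|A|}{dt} \le -2\pi$ past $t_{max}$ and obtain a negative area, or equivalently the flow would have to continue beyond $t_{max}$. More carefully: from $\frac{d|A|}{dt} \le -2\pi$ we get $|A|(t) \le |A|(0) - 2\pi t$ for all $t \in [0, t_{max})$, so $|A|(t) \to 0$ forces $t_{max} \le |A|(0)/(2\pi)$, consistent with finiteness; conversely, if $\lim_{t\to t_{max}}|A| = c > 0$, we need to rule this out.

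The key point is therefore to show that the total area cannot converge to a positive limit. I would argue by contradiction: suppose $|A|(t) \to c > 0$ as $t \to t_{max}$. Using property (III), the balanced figure-eight consists of two loops meeting at a single self-intersection point; the total enclosed area $|A|$ is the sum of the (absolute) areas of the two loops, each of which is a closed embedded curve (away from the crossing). By the monotonicity formula / isoperimetric-type control, an embedded loop enclosing area bounded below by a positive constant has length bounded below by a positive constant, so the figure-eight cannot be shrinking to a point, and in fact each loop persists as a genuine region. One then invokes the regularity structure of the limit from Theorem~\ref{lcsf:exists}(iii): the limit curve $\gamma^\ast$ is piecewise $C^1$ with finitely many singular points, and if $c>0$ the limit still encloses positive area, so at least one loop survives to the limit as a nondegenerate region. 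But at the final time the curvature blows up, and by Angenent's classification of singularities of curve shortening flow for embedded curves (type I, shrinking circle) the only way curvature blows up on an embedded loop enclosing positive area in finite time is if that loop itself shrinks — contradicting that its area stays near $c$. The cleanest route is: the interior angle at the self-intersection is in $[0,\pi]$, so $\frac{d|A|}{dt} = -2\pi - 2(\text{angle}) \ge -4\pi$ always, but also $\le -2\pi$; integrating, $|A|$ is strictly decreasing at rate at least $2\pi$, hence reaches $0$ in time $\le |A|(0)/(2\pi)$. Since $|A| \ge 0$ and is continuous on $[0,t_{max}]$ (it extends continuously to the limit by (iii)), and $t_{max}$ is exactly the first singular time, the only consistent possibility is $|A|(t_{max}) = 0$: if $|A|$ hit $0$ strictly before $t_{max}$ the flow would be regular there yet enclose zero area with a self-intersection, which is impossible for a genuine figure-eight, and if $|A|$ stayed positive the bound $|A| \le |A|(0) - 2\pi t$ would still be satisfiable only up to $t = |A|(0)/(2\pi)$, so $t_{max} \le |A|(0)/(2\pi)$ and $|A|(t_{max}) \le |A|(0) - 2\pi t_{max}$; matching this with the structure forces the limit to be $0$.

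The main obstacle is making the last dichotomy rigorous: ruling out that $|A|$ stabilizes at a positive value while the curvature nonetheless blows up. This requires knowing that a singularity of curve shortening flow cannot occur while all enclosed regions retain definite area — i.e., that blow-up is accompanied by collapse of area somewhere. I expect to handle this via Grayson's and Angenent's analysis: near a singular point of the limit curve $\gamma^\ast$, rescaling produces a self-shrinker, and the only embedded self-shrinkers are lines and circles; a line contributes no area collapse but also no curvature blow-up in the relevant sense, while a circle forces a loop to have vanishing area in the limit. Combined with the fact that the figure-eight has exactly two loops whose areas sum to $|A|$, a blow-up must kill the area of at least one loop, and then the monotone decrease plus $\frac{d|A|}{dt} \le -2\pi$ propagates this to $|A| \to 0$. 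In fact, Grayson's Lemma~2 (the preceding lemma) already encodes enough: the derivative formula $\frac{d|A|}{dt} = -2\pi - 2(\text{interior angle})$ shows that as long as the curve has a self-intersection, $|A|$ decreases at rate $\ge 2\pi$, so it must vanish in finite time, and it cannot vanish before $t_{max}$ because a balanced figure-eight (with a genuine self-intersection) always encloses positive area — hence $|A| \to 0$ exactly as $t \to t_{max}$.
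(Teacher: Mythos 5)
Your proposal correctly isolates the real difficulty --- ruling out the scenario in which the curvature blows up at $t_{max}$ while $|A|$ converges to a positive limit $c>0$ --- but neither of your two attempts to close that case actually works. The concluding claim that ``Grayson's Lemma already encodes enough'' is circular: the bound $\frac{d|A|}{dt}\le -2\pi$ forces $|A|$ to reach zero only \emph{if the flow survives until time} $|A|(0)/(2\pi)$, and whether it does is exactly what is at stake. A priori the curvature could blow up strictly earlier, with each loop still enclosing definite area, and the rate bound says nothing against this.

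The more substantive attempt in your last paragraph assumes that rescaling at a singular point produces a self-shrinker, so that the Abresch--Langer classification (line or circle for embedded curves) applies. That holds only for type~I singularities. The case you must exclude is a type~II singularity, whose blow-up is not a self-shrinker but a translating soliton --- the grim reaper --- and this is precisely the case the paper's proof is built around: assuming the areas do not collapse, one restricts to an embedded arc with smooth, non-crossing endpoints that develops an interior singularity; the blow-up there cannot be a circle (no loop is collapsing) and so must be a grim reaper; and the grim reaper is then ruled out because on it the ratio of extrinsic to intrinsic distance becomes arbitrarily small, contradicting Huisken's maximum principle for that ratio on arcs with controlled boundary. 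Without an argument excluding the grim reaper (or some other mechanism forcing curvature blow-up to be accompanied by area collapse), your proof does not go through.
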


\begin{proof}
This statement has been observed in \cite[Lemma 3]{G1} and \cite[pg 205]{A2}. We give a quick heuristic. Since $|A|$ is decreasing and the total signed area is zero, we know that either both loops collapse to zero area at
$t_{max}$, or there they both have a positive lower bound on area as the curve becomes singular. By Theorem \ref{lcsf:exists} the limiting curve consists of crossing points, analytic non-crossing points, and singular points. If
the area is not collapsing, there will be plenty of analytic non-crossing points in the limit. It follows that for a small time before the singular time, one can run curve shortening flow on an embedded arc ending at two
smooth non-crossing points that develops an interior singularity. Blowing up at the singularity, we cannot obtain a self-shrinker, but instead obtain a grim reaper in the limit, following the work of Abresch-Langer \cite{AL} and Altschuler \cite{Alt}. On the reaper, the ratio between the extrinsic and intrinsic distances becomes arbitrary small. Since the curve is smooth at its boundary, this contradicts the maximum principle for the distance ratio on arcs given by Huisken \cite[Theorem 2.1]{Huisken}.
\end{proof}

It is unknown if the curve shortening flow shrinks a balanced figure-eight to a point. Grayson \cite{G2} conjectured that all should.

\bigskip

\subsection{Figure-eights with two symmetries}

We first observe that a figure-eight shape that has symmetries about both an interior and exterior axis at a crossing point must collapse to a point. This is a consequence of the following observations of Grayson \cite[Lemma 5.2]{G0} and Angenent \cite[remark on pg 200]{A2}.

\begin{lemma}
\label{crosspoint}
The limit of a figure-eight curve at singular time is contained in the closure of the set of (presingular time) crossing points.
\end{lemma}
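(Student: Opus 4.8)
The plan is to argue by contradiction, following Grayson \cite{G0} and Angenent \cite{A2}: I want to show that no point $q\in\mathbb{R}^2$ lying outside the closure $\overline\Gamma$ of the crossing‑point trajectory $\Gamma=\{p(t):0\le t<t_{max}\}$ can lie on the limit curve $\gamma^*(S^1)$. Since $\overline\Gamma$ is closed, $4r:=\operatorname{dist}(q,\overline\Gamma)>0$; because $p(t)\notin B(q,4r)$ for every $t$, the restriction of $\gamma(t)$ to $B(q,4r)$ is an \emph{embedded} curve, and in fact (using analyticity of $\gamma(t)$ for $t>0$) a finite disjoint union of embedded arcs crossing $B(q,4r)$: a closed component would be a sub‑loop of the figure‑eight whose two endpoints are identified by $\gamma(t)$, hence a self‑intersection point, forcing $p(t)\in B(q,4r)$ --- impossible.

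The second step analyzes the tangent flow at the space‑time point $(t_{max},q)$. Assuming $q\in\gamma^*(S^1)$, Huisken's monotonicity formula produces there a homothetically shrinking solution of curve shortening flow; since $\gamma(t)$ is embedded near $q$, this is a limit of embedded curves, so by the classification of shrinkers (Abresch--Langer \cite{AL}, Altschuler \cite{Alt}) it is a round shrinking circle or a (possibly multiple) straight line. A multiplicity‑one shrinking circle would mean $\gamma(t)\cap B(q,\varepsilon)$ contains a small closed loop for $t$ near $t_{max}$, impossible by Step~1. A line of multiplicity $\ge 2$, i.e., two sheets of $\gamma(t)$ collapsing onto one another near $q$, is excluded by Huisken's distance comparison principle \cite[Theorem 2.1]{Huisken}: the two sheets are joined by an embedded sub‑arc of $\gamma(t)$ that can be kept near $q$ (hence away from the only non‑embedded point $p(t)$) and of length bounded below, so its chord‑to‑arclength ratio would tend to $0$ along the flow. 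Hence the tangent flow is a multiplicity‑one static line; in particular $(t_{max},q)$ is a regular point, $\gamma^*$ is a smooth embedded arc near $q$, and the curvature of $\gamma(t)$ stays bounded near $q$ as $t\to t_{max}$.

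The last step turns $\lim_{t\to t_{max}}|A|=0$ into a contradiction with Step~2. Write $\gamma(t)=C_1(t)\cup C_2(t)$ for the two lobes; since the only self‑intersection is $p(t)$, each $C_i(t)$ is a Jordan curve through $p(t)$ bounding a Jordan domain $D_i(t)$ with $|D_i(t)|\le|A|\to 0$. Suppose $q=\gamma^*(u_0)$ with $u_0$ on the $i$‑th lobe, and fix a small $\delta>0$. If for all $t$ near $t_{max}$ the set $\gamma(t)\cap B(q,\delta)$ were a single embedded arc, then (being $C^1$‑close to the smooth arc $\gamma^*$ by Step~2) $D_i(t)$ would contain a fixed‑size region on one side of it near $q$, contradicting $|D_i(t)|\to 0$. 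So for $t$ arbitrarily close to $t_{max}$ there is a second arc of $\gamma(t)$ in $B(q,\delta)$; since $|D_i(t)\cap B(q,\delta)|\to 0$, two boundary pieces of the thin set $D_i(t)\cap B(q,\delta)$ come within distance $o(1)$ of each other, so two sheets of $\gamma(t)$ collapse onto one another at a point of $B(q,\delta)$. Letting $\delta\downarrow 0$ produces a point $q'$, still outside $\overline\Gamma$, at which the tangent flow has multiplicity $\ge 2$, contradicting Step~2 applied at $q'$. Therefore $\gamma^*(S^1)\subseteq\overline\Gamma$.

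I expect the main obstacle to be making this last step fully rigorous: extracting from the vanishing of the lobe areas a genuine collapsing pair of sheets of $\gamma(t)$ near every non‑crossing point of $\gamma^*$ --- rather than merely a geometric ``width'' tending to zero --- and verifying that the pinching it would create is already excluded by Step~2, all while controlling the degeneration of the Jordan domains $D_i(t)$. A secondary point needing care is that the singularity classification and the distance‑comparison estimate are invoked in the non‑embedded figure‑eight setting; this is legitimate because the relevant geometry takes place inside a ball on which $\gamma(t)$ is embedded, so one applies these results to embedded sub‑arcs, exactly as in the proof that $|A|\to 0$.
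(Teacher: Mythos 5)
Your overall strategy --- the vanishing lobe areas force two sheets of the curve to collapse together at any limit point $q$ outside the closure $\overline{\Gamma}$ of the crossing points, and such a collapse is then ruled out by a maximum-principle argument --- is the same as the paper's, and your Steps 1 and 3 usefully flesh out what the paper only asserts (``this must be the limit of two smooth curves''). The genuine gap is in Step 2, in the exclusion of a multiplicity-$\geq 2$ line via Huisken's distance comparison. The two sheets passing through $B(q,\delta)$ are distinct components of $\gamma(t)^{-1}(B(q,4r))$, so any sub-arc of $\gamma(t)$ joining them must leave $B(q,4r)$; it cannot be ``kept near $q$,'' and in general it passes arbitrarily close to (or, if the sheets lie on different lobes, through) the crossing point $p(t)$. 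Huisken's Theorem 2.1 is proved for embedded closed curves, and for a figure-eight the global infimum of the chord-to-arc ratio is identically zero because of the crossing. The arc version invoked elsewhere in the paper requires control of the ratio at the two endpoints of the embedded arc; the natural arc containing your two sheets is a whole lobe minus a neighborhood of the crossing, whose endpoints both lie near $p(t)$, where the ratio is already small, so the minimum can simply migrate to the boundary and no contradiction results. Choosing instead an arc with endpoints near $q$ is circular, since regularity near $q$ is exactly what is at stake.

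The paper closes this step by a different mechanism, which you can substitute directly. Once two sheets of $C_i(t)$ converge to a common smooth sub-arc of $\gamma^*$ near a point $q'\notin\overline{\Gamma}$ avoiding the finitely many singular points of $\gamma^*$ (Theorem~\ref{lcsf:exists}(iii) gives smooth convergence there), write them as ordered graphs $u_1<u_2$ over the common tangent line. The difference $w=u_2-u_1>0$ satisfies a linear, uniformly parabolic equation on a fixed space-time rectangle and tends to zero at an interior point as $t\to t_{max}$; the strong maximum principle (equivalently, the parabolic Harnack inequality) forbids this. With this replacement the monotonicity formula and the shrinker classification become unnecessary: away from $\overline{\Gamma}$ and the finitely many singular points you already have smooth, embedded, graphical convergence for free. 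A secondary point to tighten in Step 3 is the extraction of $q'$: smallness of $|D_i(t)\cap B(q,\delta)|$ gives, for each $t$ near $t_{max}$, a point where two distinct sheets of $C_i(t)$ are $o(1)$-close (using the bounded geometry of the first sheet to rule out that the nearby boundary point lies on the same sheet), and you must then pass to a subsequential limit of these points as $t\to t_{max}$ and note that it remains in $\overline{B(q,\delta)}$, hence outside $\overline{\Gamma}$ provided $\delta<r$.
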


\begin{proof}
Suppose to the contrary that there is some smooth piece of curve with positive length that is not in the limit of the crossing points. Since this must be the limit of two smooth curves, we can write both curves as a graph over a tangent plane at the smooth piece, for time very near the singular times. Both pieces converge smoothly and meet at this point, but this violates the strong maximum principle. 
\end{proof}

The next corollary is immediate.

\begin{corollary}
Any balanced figure-eight whose crossing point remains fixed under the curve shortening flow must shrink to a point.
\end{corollary}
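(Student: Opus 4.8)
The plan is to deduce the statement directly from Lemma~\ref{crosspoint} together with the preservation properties established above. First, recall that properties (I.)--(III.) are preserved under the curve shortening flow, so the curve $\gamma(t)$ is a balanced figure-eight for every $t\in[0,t_{max})$; in particular it has exactly one self-intersection, so ``the crossing point'' is a well-defined point of $\mathbb{R}^{2}$ for each $t$. Denote it by $c(t)$.

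The hypothesis is precisely that $c(t)\equiv p$ for some fixed $p\in\mathbb{R}^{2}$, independent of $t$. Hence the set of all pre-singular-time crossing points is the singleton $\{p\}$, and its closure is again $\{p\}$. Applying Lemma~\ref{crosspoint}, the limit curve $\gamma^{\ast}:S^{1}\rightarrow\mathbb{R}^{2}$ at the singular time satisfies $\gamma^{\ast}(S^{1})\subseteq\{p\}$, so $\gamma^{\ast}$ is the constant map with value $p$.

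Finally, by Theorem~\ref{lcsf:exists}(iii) (applied to the planar flow, which is the content of that statement after the reduction), the curves $\gamma(t)$ converge uniformly to $\gamma^{\ast}$ as $t\rightarrow t_{max}$. Since $\gamma^{\ast}$ is the constant map $p$, this says exactly that $\gamma(t)$ collapses to the point $p$ at the first singular time, which is the assertion.

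There is essentially no hard step here: the corollary is an immediate consequence of Lemma~\ref{crosspoint}. The only point requiring a word of care is that one must know the curve genuinely remains a figure-eight (so that the phrase ``the crossing point'' continues to make sense and there is exactly one of them) up to $t_{max}$; this is guaranteed by the preservation of (I.)--(III.) discussed above, which in turn rests on Angenent's monotonicity of the number of self-intersections.
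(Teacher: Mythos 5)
Your argument is correct and is exactly the intended one: the paper labels this corollary ``immediate'' from Lemma~\ref{crosspoint}, and your deduction (fixed crossing point $\Rightarrow$ closure of crossing points is a singleton $\Rightarrow$ the limit curve is that single point, with uniform convergence supplied by Theorem~\ref{lcsf:exists}(iii)) is precisely that reasoning, with the preservation of the figure-eight structure correctly noted.
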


In particular, if a figure-eight is symmetric with respect to reflections about the $x$-axis and the $y$-axis, then it shrinks to a point under the curve shortening flow.

\bigskip

\subsection{Figure-eights with one symmetry}

We impose two conditions on balanced figure-eights:

\begin{enumerate}
\item[(1.)] The curve has two inflection points. That is, the curvature vanishes
only twice.

\item[(2.)] The curve is symmetric across an interior axis. That is, the line that bisects the intersection on the inside of the curve is an axis of symmetry.

\end{enumerate}
We observe that both of these conditions are preserved: That the first condition is preserved is found in \cite{A1}. The second condition is preserved by symmetry along with the observation that curve shortening flow in $\mathbb{R}^2$ is unaffected by reflections and translations.

Before proving the main result, we give a proposition. 

\begin{proposition}
Suppose a closed curve evolves by curve shortening flow on the time interval $\left[  -\tau_{0}/2,0\right]$ and at time $-\tau_{0}/2$ the curve is contained in the rectangle
\[
\mathcal{R}=(-\infty,0]\times\lbrack-C_{0}\tau_{0},C_{0}\tau_{0}].
\]
Then the curve shortening flow moves the curve to the left a distance of
\[
\frac{1}{4C_{0}}+2C_{0}\tau_{0}\log\cos(\frac{1}{2}).
\]
\end{proposition}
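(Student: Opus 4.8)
The plan is to run a scaled ``grim reaper'' translating soliton as a one--sided barrier to the right of the curve and apply the avoidance principle for curve shortening flow. First I would set up the barrier. For a parameter $a>0$, consider the curve $R_t$ given as a graph $x=g(y,t)$ with
\[
g(y,t)=b+\frac{1}{a}\log\cos(ay)-a(t+\tau_0/2),\qquad |y|<\frac{\pi}{2a}.
\]
A direct computation gives $g_y=-\tan(ay)$ and $g_{yy}=-a(1+g_y^2)$, hence $g_{yy}/(1+g_y^2)=-a=g_t$, so $R_t$ is a curve shortening flow translating rigidly in the $-x$ direction with speed $a$. It is complete and properly embedded, shaped like ``$)$'', with vertex at $(\,b-a(t+\tau_0/2),\,0\,)$ and its two ends running off to $x=-\infty$ along $y=\pm\pi/(2a)$. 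The reaper separates the slab $\{|y|<\pi/(2a)\}$ into a left region $U_t^-=\{x<g(y,t)\}$ and a right region, and $g(\cdot,t)$ attains its maximum $b-a(t+\tau_0/2)$ at $y=0$.

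Next I would choose the parameters. Take $a=1/(2C_0\tau_0)$, so that the slab $\{|y|<\pi C_0\tau_0\}$ strictly contains the strip $[-C_0\tau_0,C_0\tau_0]$, and for a small $\varepsilon>0$ set $b=-2C_0\tau_0\log\cos(1/2)+\varepsilon$; since $aC_0\tau_0=1/2$, this makes $\min_{|y|\le C_0\tau_0}g(y,-\tau_0/2)=b+2C_0\tau_0\log\cos(1/2)=\varepsilon>0$. Consequently, at time $-\tau_0/2$ the part of $R_{-\tau_0/2}$ with $|y|\le C_0\tau_0$ lies in $\{x\ge\varepsilon\}$, while the remaining part of $R_{-\tau_0/2}$ lies outside the strip $|y|\le C_0\tau_0$. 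Since the given curve is contained in $(-\infty,0]\times[-C_0\tau_0,C_0\tau_0]$, it is disjoint from $R_{-\tau_0/2}$ and, in fact, contained in $U^-_{-\tau_0/2}$.

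Then I would invoke the avoidance principle. The given curve $\gamma(t)$ is compact, $R_t$ is complete and properly embedded, both solve curve shortening flow, and they are disjoint at $t=-\tau_0/2$; hence they are disjoint for all $t\in[-\tau_0/2,0]$. (Embeddedness of $\gamma$ is not needed: at a hypothetical first contact point each local branch of $\gamma$ is a graph over the common tangent line, and the strong maximum principle applies.) Moreover $\gamma(t)$ stays in $\{|y|\le C_0\tau_0\}$ because the lines $y=\pm C_0\tau_0$ are stationary solutions. A standard continuity argument --- the set of $t$ with $\gamma(t)\subseteq U_t^-$ is nonempty, open, and closed in $[-\tau_0/2,0]$ --- then gives $\gamma(t)\subseteq U_t^-$ throughout. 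Evaluating at $t=0$ yields $\gamma(0)\subseteq\{x<g(y,0)\}\subseteq\{x<g(0,0)\}=\{x<b-a\tau_0/2\}$, and $b-a\tau_0/2=-2C_0\tau_0\log\cos(1/2)-\frac{1}{4C_0}+\varepsilon$. Letting $\varepsilon\to0$ shows $\gamma(0)$ lies in the half-plane $\{x\le-(\frac{1}{4C_0}+2C_0\tau_0\log\cos(1/2))\}$; since $\gamma(-\tau_0/2)\subseteq\{x\le0\}$, this is the asserted leftward displacement.

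The computations are routine and the analytic content is entirely the avoidance principle, which is classical; the only points requiring care are the bookkeeping that pins down $a=1/(2C_0\tau_0)$ as the choice reproducing the stated constant, and --- in the eventual application to (immersed) figure-eights --- the form of the avoidance principle for an immersed compact curve against an embedded complete one. I do not expect a genuine obstacle.
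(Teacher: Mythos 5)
Your proof is correct and is essentially the paper's argument: the same grim reaper barrier with width parameter $1/(2C_0\tau_0)$ and vertex at $x=-2C_0\tau_0\log\cos(\tfrac12)$, translated by the avoidance principle over time $\tau_0/2$ to conclude the curve lies in $\{x\le -(\tfrac{1}{4C_0}+2C_0\tau_0\log\cos(\tfrac12))\}$ at time $0$. Your $\varepsilon$-offset to guarantee strict initial disjointness (the paper's reaper actually touches the corners of $\mathcal{R}$ at the initial time) and the explicit open-closed containment argument are minor, welcome refinements of the same proof.
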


\begin{proof}
\ Suppose at some fixed time $-\tau_{0}/2$ the curve is contained in the rectangle
\[
\mathcal{R}=(-\infty,0]\times\lbrack-C_{0}\tau_{0},C_{0}\tau_{0}],
\]
where $C_{0}>0$ is a fixed positive constant. Without loss of generality, we may assume the rightmost points on the curve have $x$-coordinate equal to 0. Consider the grim reaper $\mathcal{G}$ defined by
\[
\mathcal{G}(y,t)=-2C_{0}\tau_{0}\log\cos(\frac{1}{2})+2C_{0}\tau_{0}\log
\cos(\frac{y}{2C_{0}\tau_{0}})-\frac{1}{2C_{0}\tau_{0}}(t+\tau_{0}/2).
\]
Then, the rectangle $\mathcal{R}$ is contained inside the region bounded by the graph of $\mathcal{G}(\cdot,-\tau_{0}/2)$. Applying the maximum principle to the grim reaper and the figure-eight, we conclude that the figure-eight must still be inside the grim reaper at time $t=0$ (recall the flow is defined for $-\tau/2 \leq t \leq 0$). At $t=0$, we have
\[
\mathcal{G}(y,0)\leq\mathcal{G}(0,0)=\mathcal{G}(0,-\tau/2)-\frac{1}{4C_{0}},
\]
so that the grim reaper pushes in (to the left) by $1/(4C_{0})$. If $\tau_{0}$ can be chosen small relative to $C_{0}$ and $1/C_{0}$, then the grim reaper will push past $0$ by the amount
\[
\mathcal{G}(y,0)\leq-2C_{0}\tau_{0}\log\cos(\frac{1}{2})-\frac{1}{4C_{0}}.
\]

\end{proof}

Next, we deal with an intermediate case. 

\begin{proposition}
Let $\theta$ be the angle that the tangent to the curve makes with the $x$-axis. Suppose that, in addition to (1.) and (2.) above, a balanced figure-eight satisfies
\begin{equation}
\text{osc} \, \theta\leq 2 \pi-\varepsilon, \label{oscbound}
\end{equation}
for some $\varepsilon>0$. Then the curve shrinks to a point under curve shortening flow. 
\end{proposition}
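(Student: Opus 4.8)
The plan is to reduce the statement to one about the diameter and then derive a contradiction from a symmetric configuration of grim reapers.

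First, by the lemma above, $|A(t)|\to 0$ as $t\to t_{max}$, and by Theorem \ref{lcsf:exists} the curves converge uniformly to a continuous limit $\gamma^{*}$; hence it suffices to show $\operatorname{diam}\gamma(t)\to 0$, for then $\gamma^{*}$ is a point. Normalize so the interior axis is the $y$-axis, so that each $\gamma(t)$ is invariant under $(x,y)\mapsto(-x,y)$; since this reflection permutes the unique self-intersection point, the crossing point $c(t)$ lies on the $y$-axis for every $t$. Because every line is a stationary solution, the avoidance principle shows the extent of $\gamma(t)$ in any fixed direction is non-increasing; write $w(t)$ for the $x$-extent and $H(t)$ for the $y$-extent, so $w(t)\downarrow w^{*}$ and $H(t)\downarrow H^{*}$. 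By Lemma \ref{crosspoint}, $\gamma^{*}\subseteq\overline{\{c(t)\}}$ lies on the $y$-axis, so $w^{*}=0$. Assume for contradiction that $\gamma$ does not shrink to a point; then $\gamma^{*}$ is a nondegenerate subsegment of the $y$-axis, so $H^{*}>0$, and for $t$ near $t_{max}$ the curve sits in the thin slab $\{|x|\le w(t)/2\}$ with $w(t)\to 0$ while $H(t)\ge H^{*}$.

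Second, I record the shape of the loops. Since $\kappa$ vanishes only twice and must change sign at the crossing point (where the parametrization passes from one loop to the other), both zeros occur there and $\kappa$ keeps a fixed sign on each loop. Viewing a loop as a simple closed curve with a single corner at the crossing point, its total turning is $2\pi$; hence (with the appropriate orientation) it bounds a convex region $K^{\pm}_t$ whose corner has exterior angle $2\pi-\operatorname{osc}\theta$. Positivity forces $\operatorname{osc}\theta>\pi$, and the hypothesis gives $2\pi-\operatorname{osc}\theta\ge\varepsilon$, so the crossing angle stays bounded away from $0$ and $\pi$. By symmetry $K^{-}_t$ is the reflection of $K^{+}_t$, so $K^{+}_t$ is a convex region of width $\le w(t)$ and height $H(t)\ge H^{*}$, smooth except for one corner, which lies on the line $x=0$.

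Third, the grim reaper argument, applying the Proposition above with the narrow direction $x$ in the role of the bounded side of the rectangle. One places a symmetric pair of grim reapers capping the figure-eight from its two ends along the $y$-axis (flat side transverse to $x$): since the curve occupies an $x$-slab of width $w(t)\to 0$, such reapers translate, and push the curve inward, at speed comparable to $1/w(t)\to\infty$, while their minimal effective duration is only comparable to $w(t)^{2}\to 0$. Iterating over short intervals, the reapers compress $H$ at a rate that outruns the remaining lifespan. Here one uses the area identity $\operatorname{area}K^{+}_t=\tfrac12|A(t)|\in[\pi(t_{max}-t),\,2\pi(t_{max}-t)]$ together with the convexity of $K^{+}_t$ and the crossing-angle bound (which, for a genuine figure-eight whose loop interiors lie in opposite sectors at the crossing, keeps $\operatorname{area}K^{+}_t$ comparable to $w(t)\,H^{*}$): this forces $t_{max}-t$ to be comparable to $w(t)$, and the reapers then drive $H$ strictly below $H^{*}$ before time $t_{max}$, contradicting $H(t)\ge H^{*}$. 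The same bookkeeping produces a quantitative collapse rate (such as $w(t)\gtrsim t_{max}-t$ and $H(t)\lesssim\sqrt{t_{max}-t}$).

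The main obstacle is exactly this last quantitative balance: the speed/duration tradeoff of the reapers (speed $\asymp 1/w$, minimal duration $\asymp w^{2}$) must be pitted against how much lifespan is left ($\asymp|A(t)|\asymp t_{max}-t$). Both hypotheses enter here: convexity of the loops (from "two inflection points") is what ties "how thin the curve is" to "how much area remains," and the bound $\operatorname{osc}\theta\le 2\pi-\varepsilon$ is what keeps the crossing angle — hence the loop area relative to $w\cdot H^{*}$ — from degenerating; without it the estimates, and the contradiction, fail.
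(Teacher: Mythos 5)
Your overall mechanism is the same as the paper's: the enclosed area bound $A_i(t)\in[\pi\tau,2\pi\tau]$ plus convexity forces the transverse width of a loop to be $O(\tau)$, so a grim reaper of that width translates at speed $\gtrsim 1/\tau$ and, run for time $\tau/2$, pushes the curve a definite distance along the symmetry axis, which is incompatible with a nondegenerate limit. (Your packaging differs in two harmless ways: you get the transverse collapse from Lemma \ref{crosspoint} rather than from the conical containment implied by \eqref{oscbound}, and you argue by contradiction against a limiting segment instead of iterating the reaper to show $\ell\to0$ directly.)

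However, there is a genuine gap at the step where you claim that $\kappa$ ``must change sign at the crossing point,'' so that both zeros of $\kappa$ sit at the crossing and each loop bounds a convex region $K^{\pm}_t$. Nothing forces the two inflection points to lie at the crossing: winding number zero only says $\int\kappa\,ds=0$, so $\kappa$ changes sign twice \emph{somewhere}, and the symmetry only constrains the two inflection points to be a mirror pair (or to lie on the axis). For example, one loop can carry a concave dimple near its tip, with both inflection points interior to that loop and $\kappa$ of one sign at both passages through the crossing; then that loop is not convex, and your key estimate $\operatorname{area}K^{+}_t\gtrsim w(t)H^{*}$ --- which is exactly what converts $A_i\le 2\pi\tau$ into $w(t)\lesssim\tau/H^{*}$ and makes the reaper fast --- is unjustified. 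This is precisely why the paper's proof runs a case analysis on the location of the two inflection points relative to the crossing (cases (a.), (b.), (c.)): in each case it extracts a \emph{convex subregion of one of the two regions} whose projection onto the axis is at least a quarter of the total, switching to the other region when necessary (using the equal-area condition to show the convex region cannot have small projection), and applies the area bound to that convex piece. You would need to reproduce this bookkeeping, and also verify that the loop you cap with the reaper carries a definite fraction of the height $H^{*}$ and realizes the width $w(t)$. A smaller point: \eqref{oscbound} gives only an \emph{upper} bound $\pi-\varepsilon$ on the crossing angle, not a lower bound away from $0$ as you assert; fortunately the lower bound is not actually needed anywhere in the argument.
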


\begin{proof}
First, we observe that $\text{osc} \, \theta$ is strictly monotone. This follows from arguments in \cite[Lemma 5]{G2}, applied at both the maximum and minimum of $\theta.$ Notice that (\ref{oscbound}) implies that the crossing angle must be bounded above by $\pi-\varepsilon.$ In fact, (\ref{oscbound}) implies that the curve is contained in conical regions on either side of the crossing point.

We assume here and in the sequel that the $x$-axis is the interior axis across which the curve is symmetric. In particular, the $y$-oscillation of the curve must decay with the $x$-oscillation.

Now consider the curve shortening flow of a balanced figure-eight, satisfying conditions (1.) and (2.) above, that is area collapsing at $t=t_{max}$. \ Let $\tau=t_{max}-t$. Then
\begin{equation}
\pi\tau\leq A_{i}(t)\leq2\pi\tau,\qquad i=1,2,\label{areabound}
\end{equation}
where $A_{1}(t)$ and $A_{2}(t)$ are the areas of the two loops of the figure-eight at time $t$.

At a given time, position the curve so that the crossing is at the origin, the axis of symmetry is the $x$-axis, and the loop with the largest projection onto the $x$-axis opens to the right. Our first goal is to show that one loop contains a large convex piece. Let $\ell$ denote the length of the projection onto the $x$-axis. Note that by the angle condition, the region to the right of the origin must be completely contained in the right-hand plane. We deal with several cases.

\begin{enumerate}
\item[(a.)] Both inflection points lie at or to the left of the origin: In this case, the region to the right of the origin is convex. This region has projection onto the $x$-axis of length at least $\ell/2.$

\item[(b.)] Both inflection points lie to the right of the origin, but with $x$-coordinate less than $\ell/4$: In this case, we still have a large portion of the right region that is convex. In fact, the projection of this convex region onto the $x$-axis has length at least $\ell/4.$

\item[(c.)] Both inflection points lie to the right of the origin, but with $x$-coordinate at least $\ell/4$: For this case, the left region is convex. We claim it must also have projection onto the $x$-axis of length at least $\ell/4$. If not, then we will be able to reflect it across the $y$-axis and it will lie strictly inside of the right region. This violates the second balancing condition (II.).
\end{enumerate}
In any of the above cases, we have a convex subregion that projects onto the $x$-axis with length at least $\ell/4$.

Next, translate and reflect if necessary, so that the tip of the loop in question is at the origin. We claim that the loop in question is contained inside the rectangle
\[
\mathcal{R}=(-\infty,0]\times\lbrack-\frac{8\pi\tau}{\ell},\frac{8\pi\tau}{\ell}].
\]
To see this, suppose the maximum $y$-coordinate of the loop in question is $h$. Then, the area inside of this loop must be at least $h \ell /4$. But the area is bounded above by $2\pi\tau$, so we have
\[
h\leq\frac{8\pi\tau}{\ell}.
\]

Now, run curve shortening flow forward in time $\tau/2$, comparing to the grim reaper in the previous proposition. Observe that the maximum principle prohibits the reaper from making an interior touching on the right side of the figure-eight. The reaper will not touch the crossing point on the $x$-axis, so we need not fret if the left portion of the curve is not contained in the rectangle. The argument above tells us the curve must translate to the left by
\begin{equation}
\frac{\ell}{32\pi}+\frac{16\pi}{\ell}\tau\log\cos(\frac{1}{2}). \label{decrease}
\end{equation}
It follows that the length of the projection of the curve has decreased by (\ref{decrease}) in half of the time to extinction.

Now, repeat this argument, proceeding half of the time to extinction. If $\ell \geq\delta>0$, then as $\tau\rightarrow0$ the quantity (\ref{decrease}) will be bounded below by a positive value, implying a contradiction. Thus, $\ell \rightarrow 0$. 

Finally, since the $y$-oscillation decays with the $x$-oscillation, we conclude that the curve shrinks to a point. 
\end{proof}

\begin{corollary}
Suppose at any point in time, a balanced figure-eight curve satisfying (1.) and (2.) has a double inflection point. Then the curve must shrink to a point.
\end{corollary}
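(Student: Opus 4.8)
The plan is to deduce this from the preceding Proposition. Conditions (1.) and (2.) are preserved by the flow, so the only thing I need to produce is some $\varepsilon>0$ with $\text{osc}\,\theta\leq 2\pi-\varepsilon$ from the moment the double inflection point appears onward; then the Proposition, applied to the flow restarted at that time, gives that the curve shrinks to a point.

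First I would examine the curvature at the time $t_{0}$ at which the double inflection point occurs. By hypothesis (1.) the curvature $\kappa$ vanishes exactly twice, so a double inflection point is precisely the coalescence of these two zeros into one zero of $\kappa$ of multiplicity two; at such a zero $\kappa$ does not change sign, and since it is the only zero, $\kappa$ has a single sign on the rest of $S^{1}$. Together with the winding-number-zero identity $\int_{S^{1}}\kappa\sqrt{x_{u}^{2}+y_{u}^{2}}\,du=0$, which is preserved by the flow, this is extremely restrictive: it forces the curvature to be small in an integrated sense at time $t_{0}$, and in particular $\text{osc}\,\theta(t_{0})<2\pi$. Here the symmetry hypothesis (2.) is what makes the picture clean: two distinct symmetric inflection points can only collide on the axis of symmetry, and at the resulting point the curve is flat to high order — locally a graph of the form $x-x_{0}\sim y^{4}$ over the normal line — so the tangent direction cannot wind through a full $2\pi$ on the enclosing loop.

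Next I would invoke the strict monotonicity of $\text{osc}\,\theta$ along the flow, established in the proof of the preceding Proposition (via the argument of \cite{G2} at the maximum and minimum of $\theta$). Being bounded below, $\text{osc}\,\theta$ is non-increasing, so $\text{osc}\,\theta(t)\leq\text{osc}\,\theta(t_{0})<2\pi$ for all $t\geq t_{0}$. Taking $\varepsilon=2\pi-\text{osc}\,\theta(t_{0})>0$, the hypothesis (\ref{oscbound}) of the preceding Proposition holds for the flow on $[t_{0},t_{max})$, and that Proposition then yields that the curve shrinks to a point.

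The main obstacle is the first step: making precise exactly what a double inflection point forces on $\kappa$, and in particular ruling out that $\text{osc}\,\theta=2\pi$ could persist in the presence of a degenerate zero of the curvature. One has to be careful that a genuine double zero of $\kappa$ at an interior time is itself in tension with $\int\kappa=0$ and with Angenent's instantaneous simplicity of zeros, so the cleanest formulation may be to read the hypothesis at the singular time (the two inflection points converging to a single point of the limit curve) and argue that this forces $\text{osc}\,\theta$ below $2\pi$ already at times just before $t_{max}$; the symmetry condition is precisely what pins the limiting inflection onto the axis and makes the flatness estimate — hence the bound on $\text{osc}\,\theta$ — available.
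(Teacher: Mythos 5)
Your overall strategy (get $\text{osc}\,\theta\le 2\pi-\varepsilon$ at some time, then invoke the preceding Proposition) is the right one, and your second step — monotonicity of $\text{osc}\,\theta$ plus the Proposition — matches the paper. The gap is in your first step, and it stems from a misreading of the hypothesis. You interpret ``double inflection point'' as a single parameter value where $\kappa$ has a zero of multiplicity two, and you correctly notice that this is incompatible with $\int_{S^1}\kappa\,ds=0$ (a sign-definite $\kappa$ with one degenerate zero cannot integrate to zero), which would make the corollary vacuous under your reading. Your proposed repair — reinterpreting the hypothesis at the singular time and appealing to a flatness estimate ``$x-x_0\sim y^4$'' — changes the statement (which says ``at any point in time'') and, more importantly, does not actually yield the bound $\text{osc}\,\theta<2\pi$: local flatness of the curve at one point says nothing about how much the tangent direction winds along the rest of each loop.

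The intended reading, which the paper's proof makes clear, is that the two inflection points (two distinct parameter values where $\kappa$ changes sign) coincide as a point of the plane; since the curve has exactly one self-intersection, this forces both inflection points to sit at the crossing point. Then each loop is an arc from the crossing point to itself on which $\kappa$ has a single sign, i.e.\ both loops are convex. The tangent angle is monotone along each convex loop and turns by at most $2\pi$ (equal to $2\pi$ minus the exterior angle at the crossing), and by zero winding number the two loops turn by equal and opposite amounts, so $\text{osc}\,\theta\le 2\pi$ at that instant. The strict monotonicity of $\text{osc}\,\theta$ from \cite[Lemma 5]{G2} (which you invoke, but only need here in its strict form, since at the initial instant one may have equality with $2\pi$) then gives $\text{osc}\,\theta\le 2\pi-\varepsilon$ a short time later, and the preceding Proposition applies. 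In short: you needed the convexity of both loops, not an analysis of the order of vanishing of $\kappa$.
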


\begin{proof}
Because there are no other inflection points, it follows that both sides are convex. Now, the crossing angle can be no more than $2\pi$, so  by \cite[Lemma 5]{G2}, the oscillation will be strictly less than $2\pi$ after a short time. Convergence follows from the previous proposition.
\end{proof}

\bigskip

Now, we are ready to prove the main theorem.

\begin{proof}
[Proof of Theorem 1]If there is a double inflection point, we are done. So, we assume that the curvature at the crossing point has a sign. It follows that one region, (without loss of generality, the left one) must be convex.
Now, let the curve shortening flow run without repositioning the crossing point. As long as the curvature remains nonzero at the crossing point, the crossing point will move to the left. Since the crossing points are moving monotonically to the left inside a bounded region, they converge to a unique limit point, and it follows from Lemma~\ref{crosspoint} that the entire curve shrinks to a point. 
\end{proof}

We can refine the reaper argument slightly to get a rate of convergence.

\begin{theorem}
\label{collapse_rate}
Suppose that, in addition to (1.) and (2.) above, a balanced figure-eight satisfies the oscillation bound (\ref{oscbound}). Then there is an $\alpha_{0} > 0$ such that
\begin{equation}
\limsup_{\tau\rightarrow0}\frac{\ell(\tau)}{\tau^{\alpha}}\leq1, \label{the}
\end{equation}
for all $\alpha<\alpha_{0}.$
\end{theorem}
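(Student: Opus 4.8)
The plan is to iterate the reaper estimate from the previous proposition quantitatively, tracking how much the projection length $\ell$ decreases over each half-time-to-extinction window and summing the resulting geometric-type recursion. Recall from the intermediate proposition that over a time step in which the time-to-extinction goes from $\tau$ to $\tau/2$, the projection length decreases by at least the amount in (\ref{decrease}), namely $\frac{\ell}{32\pi} + \frac{16\pi}{\ell}\tau\log\cos(\tfrac12)$. The first term is the ``useful'' one: it says $\ell$ shrinks by at least a fixed fraction of itself at each step, which alone would give geometric decay in the number of steps. The second term is negative and potentially large when $\ell$ is small relative to $\tau$, so the first task is to control it: since we already know $\ell \to 0$ (from the intermediate proposition) and $\tau \to 0$, and since the loop sits in the rectangle of height $8\pi\tau/\ell$, we need a lower bound of the form $\ell \geq c\,\tau^{\beta}$ for some $\beta$ to guarantee $\frac{\tau}{\ell}$ is not too large. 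Such a crude lower bound should follow from the area bounds (\ref{areabound}): a loop of area at least $\pi\tau$ with projection $\ell$ has height at least $\pi\tau/(2\ell)$ but also at most some fixed constant (the curve is shrinking and lies in a bounded region), forcing $\ell \geq c\tau$ actually — or more carefully, combining the convex-piece area lower bound $h\ell/4 \le 2\pi\tau$ with a diameter bound gives $\ell$ bounded below linearly in $\tau$ up to constants.

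Once we have $\ell \gtrsim \tau$ (or $\ell \gtrsim \tau^{\beta}$), set $\tau_k = \tau_0 2^{-k}$ and let $\ell_k = \ell(\tau_k)$. The recursion reads roughly $\ell_{k+1} \leq \ell_k - \frac{\ell_k}{32\pi} - \frac{16\pi}{\ell_k}\tau_k\log\cos(\tfrac12) = \ell_k(1 - \tfrac{1}{32\pi}) + \frac{16\pi |\log\cos(1/2)|}{\ell_k}\tau_k$. Writing $q = 1 - \tfrac{1}{32\pi} \in (0,1)$ and using $\ell_k \gtrsim \tau_k$ to bound the error term by $C\,\tau_k \cdot (\tau_k/\ell_k)^{-1} \cdot \ldots$, one obtains $\ell_{k+1} \leq q\,\ell_k + C'\tau_k$ for a constant $C'$. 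Iterating a linear recursion of the form $\ell_{k+1} \le q\ell_k + C'\tau_0 2^{-k}$ gives $\ell_k \leq q^k \ell_0 + C'\tau_0 \sum_{j<k} q^{k-1-j} 2^{-j} \leq C''\max(q,\tfrac12)^k$ since the convolution of two geometric sequences is dominated by a geometric sequence with the larger ratio. Hence $\ell_k \leq C'' r^k$ where $r = \max(q, \tfrac12) < 1$. Since $\tau_k = \tau_0 2^{-k}$, we have $k = \log_2(\tau_0/\tau_k)$, so $\ell_k \leq C'' (\tau_k/\tau_0)^{\alpha_1}$ with $\alpha_1 = -\log_2 r > 0$. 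This is exactly a bound $\ell(\tau) \leq C\,\tau^{\alpha_1}$ along the dyadic sequence; interpolating to all $\tau$ using the monotonicity of $\ell$ (which decreases along the flow, or at worst changes controllably between dyadic times) upgrades it to all $\tau$, and absorbing the constant $C$ for any exponent $\alpha < \alpha_0 := \alpha_1$ (since $C\tau^{\alpha_0} = C\tau^{\alpha}\cdot\tau^{\alpha_0 - \alpha} \le \tau^{\alpha}$ once $\tau$ is small) yields (\ref{the}).

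I would organize the write-up as: (i) establish the crude lower bound $\ell(\tau) \geq c_0 \tau$ from (\ref{areabound}) and the uniform diameter bound near $t_{max}$; (ii) re-derive the one-step decrease (\ref{decrease}) and substitute the lower bound to get the clean linear recursion $\ell_{k+1} \le q\ell_k + C'\tau_k$ with explicit $q = 1 - \tfrac{1}{32\pi}$; (iii) solve the recursion to get geometric decay $\ell_k \le C'' r^k$; (iv) convert the dyadic-in-$k$ decay into polynomial-in-$\tau$ decay and absorb constants to conclude for every $\alpha < \alpha_0$.

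The main obstacle I anticipate is step (i) — making the error term in (\ref{decrease}) genuinely lower-order rather than comparable to the main term. If only $\ell \gtrsim \tau^{\beta}$ with $\beta > 1$ is available, the error term $\frac{\tau}{\ell} \sim \tau^{1-\beta}$ blows up and the recursion degenerates; one then has to be more careful, perhaps choosing the time step adaptively (advancing by a fraction of $\ell^2$ rather than a fraction of $\tau$) so that the reaper's rectangle has controlled aspect ratio, i.e. choosing the comparison reaper's width comparable to $\ell$ itself. This adaptive choice is what ultimately pins down the value of $\alpha_0$, and getting the bookkeeping right — in particular ensuring each time step stays within $[-\tau/2, 0]$-type windows where (\ref{areabound}) and the convex-piece argument remain valid — is the delicate part. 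Everything else is the routine summation of a linear recursion.
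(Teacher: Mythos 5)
Your overall architecture --- iterate the grim-reaper decrease over dyadic windows $\tau_k=\tau_0 2^{-k}$, obtain geometric decay of $\ell_k=\ell(\tau_k)$ in $k$, then convert to polynomial decay in $\tau$ using monotonicity of $\ell$ --- is the same as the paper's, and your steps (iii) and (iv) are sound. The gap is in step (ii), and it is precisely the obstacle you flag at the end without resolving. The one-step error term in (\ref{decrease}) is $16\pi\left\vert\log\cos(\tfrac12)\right\vert\,\tau_k/\ell_k$, which relative to the main term $\ell_k/(32\pi)$ has size proportional to $\tau_k/\ell_k^2$. Your proposed lower bound $\ell\geq c_0\tau$ is correct (area at least $\pi\tau$ inside a region of uniformly bounded height forces it), but it only yields $\tau/\ell^2\leq 1/(c_0^2\tau)$, which blows up as $\tau\to0$; and it bounds the error term by a constant, not by $C'\tau_k$. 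So the linear recursion $\ell_{k+1}\leq q\,\ell_k+C'\tau_k$ you want does not follow; with an additive constant error the recursion stalls near $C/(1-q)$ and gives no decay at all. More structurally, no a priori bound $\ell\gtrsim\tau^{\beta}$ controls $\tau/\ell^2$ unless $\beta\leq 1/2$, and you cannot assume $\ell\gg\sqrt{\tau}$ in advance.

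The paper closes this not with a lower bound on $\ell$ but with a dichotomy at each dyadic step. Fix $\varepsilon>0$ small and set $\eta=1-c_1+c_2\varepsilon<1$. If $\tau_k/\ell_k^2\leq\varepsilon$, the reaper estimate gives the clean multiplicative decay $\ell_{k+1}\leq\eta\,\ell_k$. If instead $\tau_k/\ell_k^2>\varepsilon$, then $\ell_k<\sqrt{\tau_k/\varepsilon}$, which (after the normalization $\tau_0=\varepsilon 2^{-k_0}$, $\ell(\tau_0)\leq1$) is at most $(1/\sqrt2)^{k}$ --- already far better than the target $\eta^{k}$, since $\eta>1-\tfrac{1}{32\pi}>1/\sqrt2$ --- and monotonicity of $\ell$ carries this to step $k+1$. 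Either way the induction $\ell(\tau_0 2^{-k})\leq\eta^{k}$ goes through with no lower bound on $\ell$ needed, and this is what pins down $\alpha_0=-\log(1-\tfrac{1}{32\pi})/\log2$. Your adaptive-time-step suggestion is a conceivable alternative fix, but as written it is not carried out (you would still need the rectangle containment and the area bounds (\ref{areabound}) on the adapted windows); the dichotomy is the missing idea.
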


\begin{proof}
Recall that $\ell$ is the length of the the projection on the $x$-axis and $\tau$ is the time remaining until the area collapses. It follows from the maximum principle that $\ell(\tau)$ is monotone. We assume that the flow is defined on $\left[  T_{0},0\right]$, for some negative number $T_0$.

\bigskip

Step 1. \ Oscillation decay based on the grim reaper argument. \ At any $\tau$ we may run the flow for time $\tau/2$ and the length will have decreased by $\frac{\ell}{32\pi}+\frac{16\pi}{\ell}\tau\log\cos(\frac{1}{2})$. We write
\begin{align*}
\frac{\ell}{32\pi}+\frac{16\pi}{\ell}\tau\log\cos(\frac{1}{2}) &  = \ell \left(  \frac{1}{32\pi}+16\pi\log\cos(\frac{1}{2})\frac{\tau}{\ell^{2}}\right)  \\
&  = \ell \left(  c_{1}-c_{2}\frac{\tau}{\ell^{2}}\right),
\end{align*}
so that
\begin{equation}
\ell(\tau/2)\leq\left[  1 - c_1 + c_2 \frac{\tau}{\ell(\tau)^2} \right]  \ell(\tau), \label{decay}
\end{equation}
where $c_1 = \frac{1}{32\pi}$ and $c_2 = 16\pi \log\cos(\frac{1}{2})$.

\bigskip

Step 2. \ For small $\varepsilon > 0$, there exists $\tau_{0}>0$ such that $\ell(\tau_{0})\leq 1$. \ Without loss of generality, we may assume that $\ell(T_0) > 1$. We also assume $\varepsilon \in (0,-T_0)$ is chosen small enough that
\[
\eta:=1 - c_{1} + c_{2} \varepsilon < 1.
\]
Then, we let
\begin{align*}
\tau_{0}  &  =\frac{\varepsilon}{2^{k_{0}}} \\
k_{0}  &  =\lceil\frac{-\log \ell(T_{0})}{\log\eta}\rceil.
\end{align*}
We can check that 
\[
\ell(\tau_{0}) \leq 1.
\]
Assume not, then $\ell(\tau)\geq1$ for all $\tau\in\lbrack\tau_{0},\varepsilon]$ and
\[
\varepsilon\geq\frac{\varepsilon}{\ell^{2}(\tau)}\geq\frac{\tau}{\ell^{2}(\tau)}.
\]
So, we may repeatedly apply the decay estimate from Step 1 to conclude that
\[
\ell \left(  \frac{\varepsilon}{2^{k_0}}\right)  \leq\eta^{k_0} \ell(\varepsilon),
\]
Thus,
\[
\ell(\tau_{0})\leq\eta^{\lceil\frac{-\log \ell(T_{0})}{\log\eta}\rceil} \ell(\varepsilon)\leq\frac{\ell(\varepsilon)}{\ell(T_{0})}\leq1,
\]
which contradicts our assumption that this estimate failed.

\bigskip

Step 3. \ Next we use induction to prove
\[
\ell\left(  \frac{\tau_{0}}{2^{k}}\right)  \leq\eta^{k}.
\]
The first step, $k=0$ is Step 2. \ Now suppose that this fails for the first time at $k,$ that is
\[
\ell\left(  \frac{\tau_{0}}{2^{k-1}}\right)  \leq\eta^{k-1}
\]
while
\begin{equation}
\ell\left(  \frac{\tau_{0}}{2^{k}}\right)  >\eta^{k}. \label{tocontra}
\end{equation}
Necessarily,
\[
\ell\left(  \frac{\tau_{0}}{2^{k}}\right)  >\eta \ell\left(  \frac{\tau_{0}}{2^{k-1}}\right),
\]
which, using (\ref{decay}), implies that
\[
\frac{\tau_{0}/2^{k-1}}{\ell^{2}(\frac{\tau_{0}}{2^{k-1}})} > \varepsilon.
\]
Then
\[
\ell \left( \frac{\tau_{0}}{2^{k-1}} \right) \, < \, \sqrt{ \frac{\tau_{0}}{\varepsilon 2^{k-1}} }  \, = \,  \left( \frac{1}{\sqrt{2}}\right)^{k_0 + k-1} \, \leq \, \left( \frac{1}{\sqrt{2}}\right)^{k},
\]
where we used $\tau_0 = \varepsilon / 2^{k_0}$ and $k_0 \geq 1$. However,
\[
\ell \left( \frac{\tau_{0}}{2^{k}} \right)  \, \leq \, \ell \left( \frac{\tau_{0}}{2^{k-1}} \right) \, <  \, \left( \frac{1}{\sqrt{2}}\right)^{k},
\]
which contradicts (\ref{tocontra}) since $\eta = 1 - c_1 + \varepsilon c_2 > 1 -\frac{1}{32 \pi} > 1 / \sqrt{2}$.

\bigskip

Step 4. \ If
\[
\alpha<\alpha_{0}=\frac{-\log(1-c_{1})}{\log2},
\]
then
\[
\limsup_{ \tau \to 0} \frac{\ell(\tau)}{\tau^{\alpha}} \leq 1.
\]

\bigskip

To see this, first choose $\delta > 0$ so that $\alpha + \delta < \alpha_0$. Then, choose $\varepsilon>0$ small enough that $$\alpha + \delta < \frac{-\log \eta}{\log 2}.$$ 

Now, for small $\tau>0$, choose $k$ such that
\[
\tau\in(\frac{1}{2^{k+1}}\tau_{0},\frac{1}{2^{k}}\tau_{0}].
\]
Then
\[
\ell(\tau) \leq \eta^{k} < \left( \frac{1}{2^k} \right)^{\alpha + \delta} \leq \left( \frac{2 \tau}{\tau_0} \right)^{\alpha + \delta} = \left( \frac{2^{\alpha + \delta} }{\tau_0^{\alpha + \delta}} \tau^\delta \right) \tau^\alpha.
\]
In particular, when $\tau$ is sufficiently small, we have
\[
\frac{2^{\alpha + \delta} }{\tau_0^{\alpha + \delta}} \tau^\delta \leq 1,
\]
and it follows that
\[
\ell(\tau)\leq\tau^{\alpha}.
\]
Therefore,
\[
\limsup_{ \tau \to 0} \frac{\ell(\tau)}{\tau^{\alpha}} \leq 1.
\]
Finally, we note that
\[
\alpha_{0}=\frac{-\log(1-c_{1})}{\log2}=\frac{-\log(1 - \frac{1}{32\pi})}{\log2} \approx0.0 \allowbreak1 \allowbreak 442\,3.
\]
\end{proof}

\bigskip

\section{Isoperimetric estimate for general figure-eights evolving by the curve shortening flow}

In this section we carry out Grayson's argument in \cite{G2} to obtain a quantitative estimate on the rate of blow-up of the isoperimetric profile. We consider the curve shortening flow starting from a balanced figure-eight that is area collapsing at $t=t_{max}.$ \ Let $\tau=t_{max}-t$, and let
\[
Q(\tau)=\frac{L^{2}}{|A|}
\]
be the isoperimetric constant. In \cite{G2}, Grayson showed that the isoperimetric profile $Q(\tau) \to \infty$ as $\tau \to 0$. The following is an attempt to quantify this result.

\begin{theorem}
Suppose a balanced figure-eight satisfies (\ref{oscbound}) and the curve shortening flow exists for  $T_0 \leq t < 0$. Let $\tau = -t$, and set $\tau_0 = -T_0 > 0$. Given $M >0$, if $\alpha \geq 0$ satisfies
\[
\alpha < \frac{ \pi  }{ 4 \sqrt{3} \ln2} \, \frac{ e^{-4 \pi M / \tau_0^\alpha} }{ \left( \text{osc} \, \theta(\tau_{0})-\pi\right) },
\]
then
\[
Q(\tau)\geq M \tau^{-\alpha},
\]
for some $\tau\in (0,\tau_0]$.
\end{theorem}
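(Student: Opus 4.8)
The plan is to follow Grayson's argument from \cite{G2} showing $Q(\tau)\to\infty$, but to track constants carefully enough to extract the explicit relationship between $\alpha$ and $M$. The starting point is the differential inequality that Grayson derives for the isoperimetric profile of a figure-eight: roughly, $\frac{d}{d\tau}\log Q \leq -c$ for a constant $c$ depending on a lower bound for the quantity measuring how far the crossing angle is from $\pi$ (equivalently, how far $\text{osc}\,\theta$ is from $2\pi$). Concretely I expect something of the shape $\frac{d}{dt}Q \geq c\,Q$ forward in time, where $c$ is controlled below by $\frac{\pi}{\sqrt 3}$ times a factor involving $\text{osc}\,\theta-\pi$ — the $\sqrt 3$ and $4\pi$ in the statement strongly suggest the estimate comes from comparing a loop of area $|A|$ and crossing angle near $\pi$ to the worst-case lens region, exactly as in \cite[Lemma 5]{G2}. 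Since $|A|\to 0$ and $|A|\leq 2\pi\tau$ (from Lemma on $d|A|/dt$), and since $L$ stays bounded below (the crossing forces a definite length), one gets $Q\to\infty$; the quantitative task is to see how fast.

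The key steps, in order, would be: (1) recall/re-derive Grayson's monotonicity $\frac{d}{d\tau}Q(\tau) \leq -C(\text{osc}\,\theta)\,Q(\tau)$ with $C$ explicit; (2) bound $\text{osc}\,\theta(\tau)$ from above by $\text{osc}\,\theta(\tau_0)$ using that $\text{osc}\,\theta$ is monotone (shown in the previous proposition via \cite[Lemma 5]{G2}), so $C$ is bounded below by a constant $C_0$ depending only on $\text{osc}\,\theta(\tau_0)-\pi$; (3) integrate the resulting inequality from $\tau$ to $\tau_0$ to get $Q(\tau) \geq Q(\tau_0)\,e^{C_0(\tau_0-\tau)} \geq Q(\tau_0)$, which alone is too weak — so instead I would argue by contradiction: suppose $Q(\tau) < M\tau^{-\alpha}$ for \emph{all} $\tau\in(0,\tau_0]$; (4) feed this hypothesis back into the differential inequality. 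With $Q < M\tau^{-\alpha}$ we also control $L^2 = Q|A| \leq M\tau^{-\alpha}\cdot 2\pi\tau = 2\pi M\tau^{1-\alpha}$, hence $L$ is small, hence (this is the crucial geometric input) the curve is trapped in a thin region, which \emph{improves} the lower bound on $C_0$ — in fact one expects a blow-up of the form $Q(\tau)\geq (\text{const})\,\tau^{-1}$ near extinction by comparison with the grim reaper, since a shrinking loop of area $\sim\tau$ and bounded-below diameter has $Q\gtrsim 1/\tau$. Combining $Q\gtrsim\tau^{-1}$ with $\alpha<\alpha_0$ and the exponential factor $e^{-4\pi M/\tau_0^\alpha}$ (which accounts for how much $Q(\tau_0)$ itself can be degraded) yields the contradiction for the stated range of $\alpha$.

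The main obstacle I anticipate is step (1)/(4): pinning down the \emph{explicit} constant in Grayson's isoperimetric monotonicity and, in particular, isolating where the factor $e^{-4\pi M/\tau_0^\alpha}$ enters. My expectation is that this exponential is not from the differential inequality for $Q$ directly but from controlling $Q(\tau_0)$ — or the length $L(\tau_0)$ — in terms of $M$: under the contradiction hypothesis one has an a priori bound on $L$ at time $\tau_0$, and running the flow or a grim-reaper comparison over the fixed interval $[T_0,0]$ converts a bound like $L(\tau_0)\lesssim \sqrt{M}$ into the factor $e^{-cL^2/\tau_0}$-type term, since the reaper pushes a distance controlled by (width)$^2$/(time). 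The bookkeeping of how $M$, $\tau_0$, and $\alpha$ interact through the two competing mechanisms (isoperimetric monotonicity forward in time vs.\ the reaper estimate over the fixed lifespan) is the delicate part; everything else is integrating an ODE inequality and invoking results already in the paper (the area bounds, monotonicity of $\text{osc}\,\theta$, and Lemma~\ref{crosspoint}).
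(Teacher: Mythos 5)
There is a genuine gap: the quantitative mechanism you organize the argument around is not the one that works, and one of your key geometric inputs is false. You propose a monotonicity formula of the form $\frac{d}{d\tau}Q\leq -C\,Q$ with $C$ controlled by $\text{osc}\,\theta-\pi$; no such differential inequality for $Q$ is proved by Grayson or used in the paper, and the proof never differentiates $Q$ at all. Your fallback estimate $Q\gtrsim\tau^{-1}$ rests on the claim that ``$L$ stays bounded below (the crossing forces a definite length),'' but under hypotheses (1.), (2.) and (\ref{oscbound}) the curve shrinks to a point, so $L\to 0$ together with $|A|$; the whole difficulty is the competition between $L^{2}\to 0$ and $|A|\to 0$. (If $L$ really were bounded below, the theorem would follow trivially from $|A|\leq 2\pi\tau$ and there would be nothing to prove.) The grim reaper plays no role here; it is the tool for the collapse-rate theorem, not for the isoperimetric estimate.

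What the paper actually does is a \emph{budget} argument for the oscillation of the tangent angle, summed over dyadic time scales. Writing $\theta$ as a function of $x$ on the graphical piece, $\theta$ solves $\theta_{t}=\cos^{2}\theta\,\theta_{xx}$, and comparison with an explicit heat-kernel solution (initial data $0$ on $[-L/2,L/2]$ and $\pi/4$ outside) shows that between $\tau_{j}=\tau_{0}2^{-j}$ and $\tau_{j+1}$ the oscillation drops by at least $\frac{\sqrt{\pi}}{4}q_{j}e^{-q_{j}^{2}}$, where $q_{j}=2^{j/2}L_{j}/\sqrt{\tau_{0}}=L_j/\sqrt{\tau_j}$. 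Since $\text{osc}\,\theta$ is nonincreasing and always at least $\pi$, the total available decrease is finite, giving the budget $\sum_{j}q_{j}e^{-q_{j}^{2}}\leq\frac{4}{\sqrt{\pi}}(\text{osc}\,\theta(\tau_{0})-\pi)$, which is (\ref{five}). Assuming for contradiction that $Q(\tau)\leq M\tau^{-\alpha}$ for all $\tau\leq\tau_{0}$, the area bounds $|A|(\tau_{j})\in[\pi\tau_{j},2\pi\tau_{j}]$ force $q_{j}^{2}\leq 2\pi M\,2^{\alpha j}/\tau_{0}^{\alpha}$, i.e.\ $q_{j}^{2}$ grows only geometrically with ratio $2^{\alpha}$; monotonicity of $s\mapsto se^{-s^{2}}$ then bounds the sum below by $\frac{1}{\alpha\ln 2}\sqrt{\pi/3}\;e^{-4\pi M/\tau_{0}^{\alpha}}$ via a Gaussian-tail integral, which exceeds the budget precisely when $\alpha$ is below the stated threshold. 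So your instinct that the exponential factor arises from terms of the form $e^{-L^{2}/\tau}$ with $L^{2}/\tau$ controlled by $M/\tau_{0}^{\alpha}$ is correct, but these enter through the heat-kernel lower bound for $\min\theta$, not a reaper comparison, and the $\sqrt{3}$ is an artifact of a crude sector estimate for the Gaussian tail, not of a lens region. Without the dyadic summation of oscillation decrements, your outline cannot be closed into a proof.
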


\begin{proof}
Let $\theta$ be the angle that the tangent to the curve makes to the $x$-axis. This is a single-valued function, and we can choose the curve to attain the minimum value of $0$ at the origin. We will consider the connected piece of the curve that is graphical over the $x$-axis in this arrangement. Let
\begin{align*}
a_{t} &  =\text{maximum value of }-x\text{ on the curve at time }t.\\
b_{t} &  =\text{maximum value of }x\text{ on the curve at time }t.
\end{align*}
Necessarily,
\[
a_{t},b_{t}<L(t)/2.
\]
We know that $a_{t},b_{t}$ are non-increasing, so following \cite{G2} we have that $\theta$ satisfies a parabolic equation forward in time defined on non-increasing subintervals of $\left[  -a_{t},b_{t}\right]$. Namely,
\[
\frac{d}{dt}\theta(x,t)=\cos^{2}\theta\frac{d^{2}}{dx^{2}}\theta(x,t).
\]

To estimate the minimum of $\theta$, we construct a comparison function. Let $g_{0}$ be the function
\begin{align*}
g_{0}(x) &  =0\text{ on }\left[  -1,1\right],  \\
g_{0}(x) &  =\frac{\pi}{4}\text{ on }\left\vert x\right\vert \geq 1,
\end{align*}
and define
\[
g(x,t)=\frac{1}{\sqrt{4\pi t}}\int_{\mathbb{R}} e^{-\left\vert x-y\right\vert ^{2}/(4t)}g_{0}(y)dy.
\]
We notice that $g$ is a solution to heat flow
\[
\frac{d}{dt}g(x,t)=\frac{d^{2}}{dx^{2}}g(x,t),
\]
and we use this to build a scaled comparison function
\[
f_{\beta}(x,t)=g(\sqrt{\beta}x,\frac{1}{2}\beta t),
\]
which satisfies
\[
\frac{d}{dt}f(x,t)=\frac{1}{2}\frac{d^{2}}{dx^{2}}f(x,t).
\]
Further, we have
\begin{align*}
f(x,0) &  =0\text{ on }\left[  -\frac{1}{\sqrt{\beta}},\frac{1}{\sqrt{\beta}}\right] , \\
f(x,0) &  =\frac{\pi}{4}\text{ on }\left\vert x\right\vert \geq\frac{1}{\sqrt{\beta}}.
\end{align*}
We choose
\[
\beta=\frac{4}{L^{2}},
\]
so that $\left[ -a_{t},b_{t}\right]  \subset\left[  -\frac{1}{\sqrt{\beta}},\frac{1}{\sqrt{\beta}}\right]$. Since $\theta$ approaches $\pi/2$ on the endpoints of $\left[-a_{t},b_{t}\right]$, we can apply a maximum principle argument (c.f. \cite[Lemma 5]{G2}) to see that
\[
\theta\geq f_{\beta}.
\]
In particular, at $\tau/2$ we have
\[
\theta(x,\tau/2)\geq f_{\beta}(x,\tau/2),
\]
so that
\[
\min\theta(\tau/2)\geq\min f_{\beta}(x,\tau/2)=f_{\beta}(0,\tau/2).
\]
Now,
\begin{align*}
f_{\beta}(0,\tau/2) &  =g(0,\frac{\tau}{L^{2}})\\
&  =\frac{1}{\sqrt{4\pi\frac{\tau}{L^{2}}}}\int e^{-\left\vert y\right\vert^{2}/( 4 \frac{\tau}{L^{2}})}g_0(y)dy\\
&  =\frac{\pi}{4}\frac{1}{\sqrt{4\pi\frac{\tau}{L^{2}}}}\int_{\left\vert y\right\vert \geq1}e^{-\left\vert y\right\vert ^{2}/(4 \frac{\tau}{L^{2}} )  }dy \\
& =\frac{\sqrt{\pi}}{8} \frac{L}{\sqrt{\tau}}   \int_{\left\vert y\right\vert \geq 1}e^{-\left\vert y\right\vert ^{2}L^{2}/(4\tau)}dy,
\end{align*}
and we estimate
\begin{align*}
\frac{\sqrt{\pi}}{8} \frac{L}{\sqrt{\tau}} \int_{\left\vert y\right\vert \geq 1}e^{-\left\vert y\right\vert ^{2}L^{2}/(4\tau)}dy &  \geq \frac{\sqrt{\pi}}{8} \frac{L}{\sqrt{\tau}} 2e^{-L^{2}/\tau} \\
&  = \frac{\sqrt{\pi}}{4}\frac{L}{\sqrt{\tau}}e^{-L^{2}/\tau},
\end{align*}
using the simple bound
\[
\int_{1}^{\infty}e^{-\delta z^{2}}dz\geq\int_{1}^{2}e^{-\delta z^{2}}dz \geq e^{-4\delta}.
\]
Thus,
\[
\min\theta(\tau/2)\geq\frac{\sqrt{\pi}}{4}\frac{L}{\sqrt{\tau}}e^{-L^{2}/\tau}.
\]

Writing $\tau_{k}=\tau_{0} / 2^k$, $L_{j}=L(\tau_{j})$ and using the global maximum principle for the single-valued function $\theta$, we deduce
\[
\text{osc} \, \theta(\tau_{j+1})\leq\text{osc} \, \theta(\tau_{j})-\frac{\sqrt{\pi}}{4}\frac{L_{j}}{\sqrt{\tau_{j}}}e^{-L_{j}^{2}/\tau_{j}},
\]
which can be iterated to conclude that
\[
\text{osc} \, \theta(\tau_{k})\leq\text{osc} \, \theta(\tau_{0})-\frac{\sqrt{\pi}}%
{4}\sum_{j=0}^{k-1}\frac{L_{j}}{\sqrt{\tau_{j}}}e^{-L_{j}^{2}/\tau_{j}}.
\]
Therefore,
\[
\frac{\sqrt{\pi}}{4}\sum_{j=0}^{k-1}\frac{2^{j/2}L_{j}}{\sqrt{\tau_{0}}%
}e^{-2^{j}L_{j}^{2}/\tau_{0}}\leq\text{osc} \, \theta(\tau_{0})-\text{osc} \, \theta(\tau_{k}).
\]
In terms of
\[
q_{j}=\frac{2^{j/2} L_j}  {\sqrt{\tau_{0}}},
\]
the above becomes
\begin{equation}
\sum_{j=0}^{\infty}q_{j}\exp(-q_{j}^{2})\leq\frac{4}{\sqrt{\pi}}\left(\text{osc} \, \theta(\tau_{0})-\pi\right).\label{five}
\end{equation}

Now, we begin to build a contradiction. Given $M>0$, assume that for all $\tau\leq\tau_{0}$ we have
\[
\tau^{\alpha}Q(\tau)\leq M.
\]
Then for the subsequence $\tau_j = 2^{-j}\tau_{0}$, we have
\begin{equation}
\left(  \tau_j \right)  ^{\alpha}Q(\tau_j) \leq M. \label{to contradict}
\end{equation}
We know from (\ref{areabound}) that $A(\tau_{j})=c_{j} \tau_0 2^{-j}$ for some constant $c_{j}\in\lbrack\pi,2\pi]$.  Thus,
\[
Q_{j}:=Q(2^{-j}\tau_{0})=\frac{L_{j}^{2}}{c_{j} \tau_0 2^{-j}}
=\frac{q_{j}^{2}}{c_{j}}.
\]
Using (\ref{to contradict}), we have
\[
\frac{q_{j}^{2}}{c_{j}}=Q_{j}\leq M\left(  \frac{2^{j}}{\tau_{0}}\right)^{\alpha}=\frac{M \mu^j}{\tau_{0}^{\alpha}}
\]
and 
\begin{equation}
q_{j}^{2} \leq \frac{c_j M \mu^j}{\tau_{0}^{\alpha}}  \leq \frac{2 \pi M \mu^j}{\tau_{0}^{\alpha}}, \label{six}
\end{equation}
where $\mu = 2^\alpha$.

Next, consider the function
\[
s \mapsto s\exp(-s^{2}),
\]
which is decreasing for $s>1/\sqrt{2}$. The isoperimetric inequality guarantees $$q_j = \sqrt{c_j Q_j} \geq 2 \pi > \frac{1}{\sqrt{2}},$$ so (\ref{six}) implies
\[
\sum_{j=0}^{\infty}q_{j}\exp(-q_{j}^{2})\geq\sum_{j=0}^{\infty}\sqrt{\frac{2 \pi M \mu^j}{\tau_{0}^{\alpha}}} \exp(-\frac{2 \pi M \mu^j}{\tau_{0}^{\alpha}}).
\]
Thus, (\ref{to contradict}) together with (\ref{five}) leads to
\begin{equation}
\frac{4}{\sqrt{\pi}}\left(  \text{osc} \, \theta(\tau_{0})-\pi\right) \geq \sqrt{\frac{2 \pi M}{\tau_{0}^{\alpha}}} \, \sum_{j=0}^{\infty}\mu^{j/2}\exp\left(-\frac{2 \pi M\mu^j}{\tau_{0}^{\alpha}}\right).\label{eight}
\end{equation}

Notice that the assumption $\tau^{\alpha}Q(\tau)\leq M$, for $\tau \leq \tau_0$, and the isoperimetric inequality imply
\[
\frac{2 \pi M \mu^s }{\tau_{0}^{\alpha}} \geq 2 \pi Q(\tau_0) \mu^s \geq 8 \pi^2 \mu^s \geq 8 \pi^2,
\]
when $s>0$. In addition, observe that the function
\[
s \mapsto \mu^{s/2}\exp\left(  -\frac{2 \pi M \mu^s}{\tau_{0}^{\alpha}} \right) = \exp\left( \frac{s}{2} \ln \mu -\frac{2 \pi M \mu^s}{\tau_{0}^{\alpha}} \right)
\]
is decreasing when $\frac{2 \pi M \mu^s}{\tau_{0}^{\alpha}} > 1/2$. Consequently,
\begin{align}
\sum_{j=0}^{\infty}\mu^{j/2}\exp\left(  -\frac{2 \pi M \mu^j}{\tau_{0}^{\alpha}} \right)   &  \geq \int_{0}^{\infty} \mu^{s/2} \exp\left(  -\frac{2 \pi M \mu^s}{\tau_{0}^{\alpha}} \right)  ds\label{seven} \\
&  =\int_{1}^{\infty}\exp\left(  -\frac{2 \pi M}{\tau_{0}^{\alpha}} u^{2}\right)  \frac{du}{\frac{1}{2}\ln\mu}. \nonumber 
\end{align}
Using the estimate
\begin{align*}
\left(  \int_{1}^{\infty}e^{-\delta z^{2}}dz\right)  ^{2} &  =\int_{1}^{\infty} \int_{1}^{\infty} e^{-\delta(z^{2}+w^{2})}dzdw \\
&  \geq\int_{\pi/6}^{\pi/3}\int_{2}^{\infty}e^{-\delta r^{2}}rdrd\varphi \\
&  =\frac{\pi}{6}\frac{e^{-4\delta}}{2\delta} = \frac{\pi e^{-4\delta}}{12\delta},
\end{align*}
we have from (\ref{seven}) that
\begin{align}
\label{seven2}
\sum_{j=0}^{\infty}\mu^{j/2}\exp\left(  -\frac{2 \pi M \mu^j}{\tau_{0}^{\alpha}} \right)
&  \geq\frac{2}{\ln\mu}\left(  \frac{\pi e^{-4 (\frac{2 \pi M}{\tau_{0}^{\alpha}})}}{12\left(  \frac{2\pi M}{\tau_{0}^{\alpha}} \right)}\right)^{1/2} \\
&  =\frac{1}{\ln\mu} \sqrt{ \frac{\tau_{0}^{\alpha} }{6M} } \, e^{-4 \pi M / \tau_{0}^{\alpha} }. \nonumber
\end{align}

Finally, combining (\ref{seven2}) with (\ref{eight}), we have
\[
\frac{4}{\sqrt{\pi}}\left(  \text{osc} \, \theta(\tau_{0})-\pi\right)  \geq \sqrt{\frac{2 \pi M}{\tau_{0}^{\alpha}}} \left( \frac{1}{\ln\mu}\sqrt{\frac{\tau^{\alpha} }{6M}} \, e^{-4 \pi M / \tau_{0}^{\alpha} } \right),
\]
so that
\[
\alpha \ln 2 = \ln\mu \geq  \frac{ \pi  }{ 4 \sqrt{3} } \, \frac{ e^{-4 \pi M/\tau_{0}^{\alpha}} }{ \left( \text{osc} \, \theta(\tau_{0})-\pi\right) }.
\]
We arrive at a contradiction if $\alpha$ satisfies
\[
\alpha < \frac{ \pi  }{ 4 \sqrt{3} \ln 2} \, \frac{ e^{-4 \pi M / \tau_0^\alpha} }{ \left( \text{osc} \, \theta(\tau_{0})-\pi\right) }.
\]

\end{proof}

\bigskip

\section{Gradient flows associated with Legendrian curve shortening flow}
\label{GF}

In this section, we discuss three different gradient flows associated with Legendrian curve shortening flow. Given a Legendrian immersion $\gamma$ as above, we can consider a normal variation of Legendrian immersions
\[
\Gamma(u,t):S^{1}\times(-\varepsilon,\varepsilon)\rightarrow{\mathbb{R}}^{3},
\]
\[
\Gamma(u,0) = \gamma(u).
\] 
The normal variation field can be written as
\[
\frac{d\Gamma}{dt}(u,0)= \phi(u)N(u)+f(u) \xi(u),
\]
and the calculation in Lemma~\ref{Lemma1} shows that this is a variarion of Legendrian immersions if and only if 
\[
\phi = \frac{1}{|\partial u|_g} f_{u}.
\]

If we consider the manifold of $C^{1}$ Legendrian immersions near a given $\gamma$, it follows that the tangent space can be parameterized by the set of $C^{1}$ functions $f$ to which we associate the normal variation fields
\[
V=\frac{1}{|\partial u|_g} f_{u}(u)N(u)+f(u) \xi(u).
\]
Note that the length functional
\[
L=\int \sqrt{x_{u}^{2}+y_{u}^{2}}du
\]
has differential
\[
dL(f)=-\int\kappa f_{u} du = - \int_{S^{1}} \kappa f_{s},
\]
where $s$ is the arclength parameter. Now, there are three distinct ways to put a metric on this space and produce a gradient flow. One possibility, which is explored in \cite{Le}, is to use the $\left(  L^{2},\sqrt{x_{u}^{2}+y_{u}^{2}}du \right)$ metric on the functions $f$, i.e.
\[
\langle f ,  h \rangle=\int f(u)h(u)\sqrt{x_{u}^{2}+y_{u}^{2}}du.
\]
The negative gradient flow of the length functional with respect to this metric, leads to a fourth order equation as follows, see \cite{Le}. The gradient of $L$ is a function $\zeta\in L^{2}$ such that
\[
\langle\zeta,f\rangle=dL(f).
\]
That is,
\[
\int_{S^{1}} \zeta f = -\int_{S^{1}} \kappa f_{s}.
\]
Integrating by parts, we see that
\[
\int_{S^{1}} \zeta f= \int_{S^{1}} \kappa_{s}f
\]
or
\[
\zeta=\kappa_{s}.
\]
Now, in the Legendrian normal direction, the curve is evolving with speed
\[
-\zeta_{s}=-\kappa_{ss},
\]
so the evolution equation for $\kappa$ can be computed using the standard formula in \cite[Cor 3.5]{Huisken84} \cite{GuanLi}
\begin{align*}
\frac{d}{dt}\kappa &  =\Delta_{g}(-\zeta_{s}) +\kappa^{2}(-\zeta_{s}) \\
&  = - \Delta_{g}^{2}\kappa - \kappa^{2}\Delta_{g}\kappa.
\end{align*}

A second possibility is to consider the full $L^{2}$ metric measuring the deformation, i.e
\[
\langle f,h\rangle=\int_{S^{1}}\left(  f h +f_{s} h_{s}\right).
\]
This defines a different sort of gradient flow equation. We require that
\[
\int_{S^{1}}f\zeta+f_{s}\zeta_{s}=-\int_{S^{1}} \kappa f_{s},
\]
which leads to the system
\begin{align*}
\zeta-\zeta_{ss}  & =\kappa_{s}, \\
\frac{d}{dt}\kappa & = -\Delta_{g}\zeta_{s} -\kappa^{2}\zeta_{s}.
\end{align*}

Finally, if one considers the $L^{2}$ metric, which only measures the deformation in the Legendrian normal direction, then
\[
\langle f,h\rangle=\int_{S^{1}} f_{s} h_{s}.
\]
Notice that this metric is indefinite in the sense that the constant functions are null vectors. Computing the negative gradient flow for the length functional with respect to this metric, we arrive at the relation $$\zeta_s = - \kappa$$ and the evolution equation for curve shortening flow $$\frac{d}{dt}\kappa = \Delta_{g} \kappa +\kappa^{3}.$$ We remark that while the metric may seem contrived to produce mean curvature flow on the base space, this metric is often studied and is very natural in the setting of Sasaki manifolds.

\bigskip

\section{Legendrian mean curvature flow}
\label{lemcf}

In this section we outline a proof of the short time existence for Legendrian mean curvature flow in Sasaki-Einstein manifolds in the case where the initial Legendrian manifold has zero Maslov class. Legendrian mean curvature flow in Sasaki manifolds was introduced and studied by Smoczyk in \cite{Smoczyk}.  

Let $(S, g)$ be a Sasaki manifold of dimension $(2n+1)$. By definition, its metric cone $(X_0\cong \R_+\times S, dr^2+r^2 g)$ is a K\"ahler manifold with  a compatible complex structure $J$. We use the notation $X=X_0\cup\{r=0\}$ to denote the cone with the vertex, and we let $g_X=dr^2+r^2 g$ denote the metric and $\omega_X$ denote the K\"ahler form. We also identify $S$ with the hypersurface $\{r=1\}$ in $X_0$. 

In terms of the homothetic vector field $r\p_r$, the Reeb vector field $\xi$ is defined to be 
\[
\xi=J\left(r\p_r\right).
\]
Note that $r\p_r$ and $\xi$ are both real holomorphic, and $\xi$ is Killing. The dual 1-form of $\xi$ is denoted by $\eta$, so that
\[
\eta=r^{-2}g_X(\xi, \cdot)=J\left(\frac{dr}{r}\right). 
\]
When restricted to $S$, the 1-form $\eta$ defines a contact structure. When a Sasaki structure $(S, g)$ is restricted to the contact subbundle $\text{Ker}(\eta)$, it inherits a transverse K\"ahler structure via
\[
g=\eta\otimes \eta+g^T.
\] The transverse K\"ahler form is given by
\[
\omega^T=\frac{1}{2}d\eta.
\]

The metric cone $(X, g_X, J)$ is called a Calabi-Yau cone if there exists a holomorphic (n+1,0)-form $\Omega$ on $X_0$ such that 
\[
\omega^{n+1}_X =c_n \Omega\wedge \bar \Omega,
\]
where $c_n = (i/2)^{n+1}(-1)^{n(n+1)/2}$. Notice that this condition implies that $g_X$ has zero Ricci curvature and $(S, g)$ is a Sasaki-Einstein manifold with $Ric= 2ng$. Also, since $\omega_X$ is homogeneous degree $2$ under the vector field $r\p_r$, the holomorphic form $\Omega$ is homogeneous degree $(n+1)$. 

From now on, we fix a Calabi-Yau cone $(X, g_X, J)$ and the corresponding Sasaki-Einstein manifold $(S, \xi, \eta, g)$. Given a submanifold $L$ of $S$, we consider a subcone $Y$ of $X$ such that $Y_0\cong \R_{+}\times L$ via the identification of $S$ with the hypersurface $\{r=1\}$ in $X_0$. By definition $L$ is a Legendrian submanifold of $S$ if $\eta=0$ on $L$. We have the following correspondence between Legendrian submanifolds in $S$ and Lagrangian subcones in $X$:

\begin{center}
$L\subset S$ is Legendrian if and only if $Y_0\subset X_0$ is Lagrangian.
\end{center}

Now, suppose $Y_0$ is a Lagrangian subcone of $X_0$ and $L$ is the corresponding Legendrian submanifold of $S$. The Calabi-Yau condition implies that 
\[
\Omega |_{Y_0} =e^{i \theta} dvol,
\]
where $\theta: Y_0\rightarrow S^1\cong\R/\Z$ is an $S^1$ valued function, called the Lagrangian angle. Since $\Omega|_{Y_0}$ and $dvol$ are both homogeneous degree $(n+1)$, the Lagrangian angle $\theta$ is independent of $r$, and we can then view it as an $S^1$ valued function on $L$, which we call the Legendrian angle $\theta: L\rightarrow S^1$.  In particular, $[d\theta]$ defines a cohomology class in $H^1(Y_0, \R)\cong H^1(L, \R)$, the Maslov class of $Y_0/L$. For a Lagrangian, the relation between the mean curvature vector of $Y_0$ and the Lagrangian angle $\theta$ is given by $H_{Y_0}=J\overline{\nabla} \theta$. 

\begin{proposition}
When restricted to $\{r=1\}$, the mean curvature vectors of $Y_0$ and $L$ are related by \[H_{Y_0}=H_L.\]
\end{proposition}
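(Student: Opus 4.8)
The plan is to compute the second fundamental form of the cone $Y_0$ directly from the warped-product structure of $g_X = dr^2 + r^2 g$ and to compare it, along $\{r=1\}$, with the second fundamental form of $L$ in $S$. First I would record the Levi-Civita connection of the cone metric: writing $\nabla^X$ for the connection of $g_X$ and $\nabla^S$ for that of $g$ on $S$, and extending vector fields on $S$ to $X_0$ so that they are $r$-independent, one has the standard cone formulas
\[
\nabla^X_{\partial_r}\partial_r = 0, \qquad \nabla^X_{\partial_r} V = \nabla^X_V \partial_r = \tfrac1r V, \qquad \nabla^X_U V = \nabla^S_U V - r\, g(U,V)\,\partial_r ,
\]
for $U,V$ tangent to $S$. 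These follow immediately from the Christoffel symbols of $dr^2 + r^2 g$ (the spatial symbols coincide with those of $g$, $\Gamma^r_{ij} = -r g_{ij}$, and $\Gamma^k_{rj} = \tfrac1r \delta^k_j$).

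Next I would identify the relevant bundles. At a point $p \in L \subset \{r=1\}$ one has $T_pY_0 = \mathbb{R}\,\partial_r \oplus T_pL$ and $T_pX = \mathbb{R}\,\partial_r \oplus T_pS$, so the normal space of $Y_0$ in $X$ at $p$ is canonically the normal space of $L$ in $S$ at $p$ (the $g$-orthogonal complement of $T_pL$ inside $T_pS$). In particular $H_{Y_0}\big|_{r=1}$ and $H_L$ are sections of the same bundle over $L$, so the asserted identity is meaningful. Then, using the connection formulas above and a frame adapted to $Y_0 = \mathbb{R}_+ \times L$, I would compute the second fundamental form $A^{Y_0}$: since $\nabla^X_{\partial_r}\partial_r = 0$ is tangent to $Y_0$, and $\nabla^X_{\partial_r} e_i = \tfrac1r e_i$ is tangent to $Y_0$ whenever $e_i$ is tangent to $L$, we get $A^{Y_0}(\partial_r,\partial_r) = 0$ and $A^{Y_0}(\partial_r, e_i) = 0$; and for $U,V$ tangent to $L$ the term $-r\,g(U,V)\,\partial_r$ is tangent to $Y_0$, so the normal component of $\nabla^X_U V$ is the normal component of $\nabla^S_U V$, i.e.\ $A^{Y_0}(U,V) = A^L(U,V)$ at $r=1$. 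Tracing over a $g_X$-orthonormal frame $\{\partial_r, e_1,\dots,e_n\}$ of $T_pY_0$, where $\{e_i\}$ is $g$-orthonormal in $T_pL$ (note $g|_L = g^T|_L$ since $L$ is Legendrian), yields
\[
H_{Y_0}\big|_{r=1} = A^{Y_0}(\partial_r,\partial_r) + \sum_{i=1}^n A^{Y_0}(e_i,e_i) = \sum_{i=1}^n A^{L}(e_i,e_i) = H_L ,
\]
which is the claim.

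There is no serious obstacle here; the only point requiring care is the bookkeeping between $\nabla^X$ and $\nabla^S$ together with the observation that the extra cone direction $\partial_r$ contributes nothing to the mean curvature of $Y_0$, i.e.\ $A^{Y_0}(\partial_r,\partial_r)=0$ and $A^{Y_0}(\partial_r,\cdot)=0$. This is exactly the geometric mechanism behind ``a cone over a minimal submanifold is minimal,'' and it is the crucial input. As an alternative one could instead argue via the Lagrangian angle, using $H_{Y_0} = J\bar\nabla\theta$ on $Y_0$, the $r$-independence of $\theta$, and $\xi = J(r\partial_r)$ to split off the radial direction; but the direct second-fundamental-form computation above is shorter and more transparent, and is what I would write.
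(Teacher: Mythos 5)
Your argument is correct and complete. The paper actually states this proposition without any proof, so there is nothing to compare against; your direct computation via the cone Christoffel symbols of $dr^2+r^2g$ is the standard way to establish it, and the key points are all in place: the identification of the normal bundle of $Y_0$ along $\{r=1\}$ with that of $L$ in $S$, the vanishing of $A^{Y_0}(\partial_r,\cdot)$, and the fact that the radial term $-r\,g(U,V)\,\partial_r$ is tangent to $Y_0$ and hence drops out of the normal projection. One small remark: the Legendrian hypothesis plays no role here (the identity holds for the cone over any submanifold of $S$), so your parenthetical about $g|_L=g^T|_L$ is harmless but unnecessary. The statement does depend on taking $H$ to be the unnormalized trace of the second fundamental form, which is the convention consistent with $H_{Y_0}=J\overline{\nabla}\theta$ used immediately afterward in the paper; with the averaged convention one would instead get a factor of $n/(n+1)$.
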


Recalling the fact that $H_L$ is perpendicular to $\xi$, we have $H_{Y_0}$ is perpendicular to the complex line generated by $\{r\p_r, \xi\}$, and consequently $d\theta(\xi)=0$. 

\begin{definition}
When the Maslov class $[d\theta]=0$, we say that a family of Legendrian submanifolds $F_t: L\rightarrow S$ is a solution to \textbf{Legendrian mean curvature flow} if
\begin{equation}\label{E-lmcf}
\left( \frac{\p F_t}{\p t} \right)^{\perp}=H+2 \theta \xi.
\end{equation}
\end{definition}

The short time existence and uniqueness of Legendrian mean curvature flow was proved in \cite{Smoczyk} by quoting standard PDE theory.  We will show the above equation is well-posed and has a unique smooth short time solution by adapting results from Behrndt's thesis \cite{Behrndt} to the setting of Legendrian mean curvature flow.

First, we need a Legendrian neighborhood theorem.

\begin{theorem}\label{T-lnb} Let $(M, \eta)$ be a $(2n+1)$ dimensional contact manifold, and let
$L$ be a Legendrian submanifold, i.e. $\text{dim}\;L =n$ and $\eta|_{TL} \equiv 0$. Then there is a
neighborhood $U$ of $L$ in $M$ and a diffeomorphism
\[
\Psi: \R\times T^*L\rightarrow U
\]
so that $\Psi^*\eta=\eta_0:=ds+\beta_0$, where $s$ is the coordinate in $\R$ and $\beta_0$ is the canonical
1-form on $T^*L$.\end{theorem}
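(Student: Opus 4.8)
The plan is to establish Theorem~\ref{T-lnb} as the contact analogue of Weinstein's Lagrangian neighborhood theorem: first build a tubular neighborhood identification of $L$ with the zero section $Z=\{0\}\times L\subset\R\times T^*L$ that pulls $\eta$ back to a form agreeing with $\eta_0=ds+\beta_0$ to first order along $Z$, and then upgrade it to an exact identification by a Moser deformation. Throughout, $\xi$ denotes the Reeb vector field of $\eta$ and $D=\ker\eta$; recall $d\eta|_D$ is symplectic and the Legendrian hypothesis says precisely that $TL\subset D|_L$ is Lagrangian for $d\eta|_D$. As a first step, choose a Lagrangian complement $W$ to $TL$ in $(D|_L,d\eta)$, smoothly over $L$ (for instance $W=J(TL)$ for a compatible complex structure on $D$). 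Then $d\eta$ identifies $W$ with $T^*L$, and combining this with $\partial_s\mapsto\xi$ and the identity on $TL$ produces a bundle isomorphism $\Phi\colon T(\R\times T^*L)|_Z\to TM|_L$ over $\mathrm{id}_L$. Since $\eta$ and $\eta_0$ restrict along their zero sections to the projections onto the $\xi$- and $\partial_s$-directions, and $d\eta_0=d\beta_0$ is the canonical base--fiber pairing along $Z$, a direct check (after fixing signs in $W\cong T^*L$) gives
\[
\Phi^*\eta|_L=\eta_0|_Z,\qquad \Phi^*(d\eta)|_L=d\eta_0|_Z.
\]

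Next, fix an auxiliary metric on $M$ and define, on a neighborhood of $Z$,
\[
\Psi_0(s,\alpha_q)=\exp_q\!\bigl(s\,\xi_q+\widehat{\alpha_q}\bigr),
\]
where $\widehat{\alpha_q}\in W_q$ corresponds to $\alpha_q\in T_q^*L$ under $\Phi$. Then $\Psi_0$ is a diffeomorphism onto a neighborhood of $L$, equals the identity on $L$, and — testing its derivative at points of $Z$ against the three obvious families of tangent vectors (along $L$, in $s$, and along fibers) — satisfies $d\Psi_0|_Z=\Phi$. Hence $\tau:=\Psi_0^*\eta-\eta_0$ and $d\tau=\Psi_0^*d\eta-d\eta_0$ both vanish at every point of $Z$. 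Because being contact is an open condition and $\eta_0$ is contact, after shrinking the neighborhood the interpolation $\eta_t:=\eta_0+t\,\tau$ is a family of contact forms for $t\in[0,1]$; let $\xi_t$ be the Reeb field of $\eta_t$, so $\xi_0=\partial_s$ and, since $\eta_t$ is $C^1$-close to $\eta_0$ near $Z$, the field $\xi_t$ is transverse to $Z$ near $Z$.

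It now suffices to find a time-dependent vector field $X_t$ vanishing along $Z$ with $\mathcal{L}_{X_t}\eta_t=-\tau$: its flow $\psi_t$ then fixes $Z$ pointwise, satisfies $\psi_t^*\eta_t=\eta_0$, and $\Psi:=\Psi_0\circ\psi_1$ is the required diffeomorphism. Writing $X_t=h_t\,\xi_t+Y_t$ with $Y_t\in\ker\eta_t$, Cartan's formula reduces the equation to $dh_t+\iota_{Y_t}d\eta_t=-\tau$. Contracting with $\xi_t$ annihilates the $Y_t$-term and leaves the transport equation $\xi_t(h_t)=-\tau(\xi_t)$, which we solve with $h_t\equiv0$ on $Z$ (legitimate since $\xi_t$ is transverse to $Z$); as $\tau$ vanishes on $Z$, so do $h_t$ and $dh_t$ at points of $Z$. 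The leftover one-form $-\tau-dh_t$ annihilates $\xi_t$, so nondegeneracy of $d\eta_t$ on $\ker\eta_t$ determines $Y_t$ uniquely, and $Y_t$ vanishes along $Z$ as well. Thus $X_t$ vanishes along $Z$, its flow is defined up to $t=1$ on a smaller neighborhood of $Z$, and unwinding the construction yields $\psi_t^*\eta_t=\eta_0$, giving the stated identification on a neighborhood of the zero section.

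The argument has no single deep step — it is the Weinstein/Moser neighborhood technique transplanted from the symplectic to the contact category — but the point requiring genuine care is that we want $\Psi^*\eta=\eta_0$ exactly, not merely $\ker\Psi^*\eta=\ker\eta_0$ (the Gray-stability statement). Demanding equality of forms forces the conformal term in the Moser equation to be zero, and that is exactly what compels the splitting of $X_t$ along the Reeb direction and the solution of the transport equation $\xi_t(h_t)=-\tau(\xi_t)$; this is feasible only because $\tau$ and $d\tau$ vanish along $Z$, which is the content of the first-order matching in the first two steps. If $L$ is noncompact one runs the deformation locally over $L$ and patches in the usual way.
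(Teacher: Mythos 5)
The paper offers no proof of this theorem --- it is quoted as the standard Legendrian neighborhood theorem (cf. Geiges' contact topology text) --- so the comparison is against the literature rather than against the paper. Your argument is the standard Weinstein--Moser neighborhood technique in the contact category, and it is correct in its essentials: the first-order matching of $\eta$ and $\eta_0$ along the zero section via a Lagrangian complement $W\cong T^*L$ and $\partial_s\mapsto\xi$, the interpolation $\eta_t=\eta_0+t\tau$, and the decomposition $X_t=h_t\xi_t+Y_t$ forced by demanding $\psi_t^*\eta_t=\eta_0$ exactly (rather than up to a conformal factor, as in Gray stability) are all exactly the right moves. The one step stated too loosely is the transport equation: prescribing $h_t\equiv 0$ only on $Z$ together with $\xi_t(h_t)=-\tau(\xi_t)$ does \emph{not} determine $h_t$ on a neighborhood of $Z$, because $Z$ has codimension $n+1$ and the $\xi_t$-flow lines emanating from $Z$ sweep out only an $(n+1)$-dimensional set. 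You need initial data on a hypersurface transverse to $\xi_t$ containing $Z$, e.g. $\{0\}\times T^*L$, to which $\xi_t$ is transverse near $Z$ since $\xi_t=\partial_s$ at points of $Z$ (as $\eta_t$ and $d\eta_t$ agree with $\eta_0$ and $d\eta_0$ there). Setting $h_t\equiv 0$ on that hypersurface still yields $dh_t|_Z=0$: tangential derivatives vanish because $h_t$ vanishes on the hypersurface, and the transverse derivative equals $-\tau(\xi_t)$, which vanishes on $Z$ because $\tau$ does; so $X_t$ still vanishes along $Z$ and the flow exists up to $t=1$ near $Z$. With that repair the proof is complete. One cosmetic remark: as with every version of this theorem, what one actually obtains is a diffeomorphism from a neighborhood of the zero section of $\R\times T^*L$ onto $U$, which is the correct reading of the statement as used later in the paper.
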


This theorem is stated for an embedded Legendrian submanifold, but there is also a version for immersions (the only difference is that we only require $\Psi$ to be an immersion instead of a diffeomorphism). Given the Legendrian neighborhood theorem, we define a map $I: L\rightarrow \R\times T^*L$ by $I(x) = (f(x), x, \beta(x))$. 

\begin{proposition}
 $(\Psi \circ I) (L)$ is Legendrian if and only if  $df+\beta=0$. 
\end{proposition}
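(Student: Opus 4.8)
The plan is to reduce the Legendrian condition to a pointwise identity of $1$-forms on $L$ by pulling back the contact form through the normal-form coordinates of Theorem~\ref{T-lnb}. Since $\Psi$ is an immersion and $I$ is a (graphical) embedding, $\Psi\circ I$ is an immersion, and so $(\Psi\circ I)(L)$ is Legendrian precisely when $(\Psi\circ I)^{*}\eta = 0$ as a $1$-form on $L$. Using functoriality of pullback and $\Psi^{*}\eta=\eta_{0}=ds+\beta_{0}$, this becomes
\[
(\Psi\circ I)^{*}\eta = I^{*}\Psi^{*}\eta = I^{*}(ds+\beta_{0}) = I^{*}ds + I^{*}\beta_{0},
\]
so the whole statement comes down to identifying the two summands.

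The first term is immediate: the $\R$-component of $I$ is the function $f$, i.e. $s\circ I=f$, so $I^{*}ds = d(s\circ I)=df$. For the second term I would invoke the tautological (reproducing) property of the canonical $1$-form $\beta_{0}$ on $T^{*}L$: if $\alpha$ is any $1$-form on $L$, regarded as a section $\sigma_{\alpha}\colon L\to T^{*}L$, then $\sigma_{\alpha}^{*}\beta_{0}=\alpha$. In the definition $I(x)=(f(x),x,\beta(x))$ the last two slots describe exactly the section of $T^{*}L$ determined by the $1$-form $\beta$, so the composition of $I$ with the projection $\R\times T^{*}L\to T^{*}L$ is $\sigma_{\beta}$, whence $I^{*}\beta_{0}=\sigma_{\beta}^{*}\beta_{0}=\beta$.

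Combining the two computations gives $(\Psi\circ I)^{*}\eta = df+\beta$, and this $1$-form vanishes identically on $L$ if and only if $df+\beta=0$; both directions of the equivalence drop out simultaneously. There is essentially no obstacle here: the only point worth spelling out carefully is the reproducing property of $\beta_{0}$, which in canonical coordinates $(q^{i},p_{i})$ on $T^{*}L$ reads $\beta_{0}=\sum_{i}p_{i}\,dq^{i}$ and holds because the section $\sigma_{\beta}$ sets $p_{i}=\beta_{i}(q)$ while acting as the identity on the $q^{i}$. One should also note at the outset that the same argument applies verbatim in the immersed version mentioned after Theorem~\ref{T-lnb}, since only the pullback $(\Psi\circ I)^{*}\eta$ is used.
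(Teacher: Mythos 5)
Your proof is correct and follows the same route as the paper's (one-line) argument: pull back $\eta$ via $\Psi^{*}\eta=ds+\beta_{0}$ and use $I^{*}ds=df$ together with the tautological property $\sigma_{\beta}^{*}\beta_{0}=\beta$ to get $(\Psi\circ I)^{*}\eta=df+\beta$. You simply supply the details that the paper leaves implicit.
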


\begin{proof}
Using the Legendrian neighborhood in Theorem \ref{T-lnb}, the restriction of $\eta$ to $T(\Psi \circ I) (L)$ is  given by $$\eta |_{T(\Psi \circ I) (L)} = df+\beta.$$
\end{proof}

Given a family of functions $f_t$, if we define the embeddings $I_{t}: L\rightarrow \R\times T^*L$ by $I_t(x)=(f_t(x), x, -df_t(x))$, then it follows from the previous proposition that $F_t=\Psi \circ I_{t}$ is a family of Legendrian submanifolds of $S$. Conversely, if $F_t$ is a family of Legendrian submanifolds, then modulo diffeomorphisms of $L$, for a short time, the family $F_t$ is induced by a family of functions $f_t$ via the embeddings $I_t: L\rightarrow \R\times T^{*}L$. The well-posedness of \eqref{E-lmcf} is due to the following fact as in \cite[Lemma 3.2 and Corollary 3.3]{Smoczyk}.

\begin{proposition}[Smoczyk \cite{Smoczyk}]\label{P-smoczyk}
Suppose $F_t: L\rightarrow S$ is a family of Legendrian submanifolds induced by a family of functions $f_t$. If the Maslov class of $[d\theta_t]$ is zero at $t=0$, then $F_t$ has zero Maslov class, for any $t>0$, and the Legendrian angle $\theta_t$ is a well-defined $\R$-valued function. 
\end{proposition}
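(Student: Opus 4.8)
The statement is essentially a transfer of Smoczyk's computation \cite[Lemma 3.2, Cor.\ 3.3]{Smoczyk} from the Lagrangian setting to the Legendrian one via the cone correspondence established above. The plan is to compute the evolution equation for the Legendrian angle $\theta_t$ along the flow \eqref{E-lmcf} and show that its time derivative is globally exact, which forces the de Rham class $[d\theta_t]$ to be constant in $t$; since it vanishes at $t=0$, it vanishes for all $t$, and then $\theta_t$ lifts to a single-valued $\R$-valued function.

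First I would recall the relation $H_{Y_0} = J\overline{\nabla}\theta$ for the Lagrangian cone $Y_0$ and $d\theta(\xi)=0$, so that $\theta$ descends to $L$ and $\nabla\theta$ makes sense as a vector field on $L$ with $H_L = H_{Y_0}$ (the Proposition above). The normal component of the flow velocity is $H + 2\theta\xi$; the tangential part is absorbed by reparametrizing $L$ by diffeomorphisms, which does not affect the cohomology class of $d\theta_t$. Next I would invoke the standard first-variation formula for the Lagrangian angle under a normal deformation $V^{\perp}$ of a Lagrangian submanifold: $\frac{\partial}{\partial t}\theta = \delta(\text{(the dual 1-form of }JV^{\perp}\text{)})$ — more precisely, writing the deformation vector in the form $JV^{\perp}$, the variation of $\theta$ is (up to sign) the divergence of the corresponding gradient-type vector field, hence $\partial_t \theta = \Delta_L f_t$ for the generating function $f_t$, up to lower-order terms coming from the contact (i.e.\ the extra $\R$-factor) structure. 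The key point is only that $\partial_t \theta_t = d(\text{something globally defined on }L)$ applied appropriately — i.e.\ $d(\partial_t \theta_t)$ is exact — which is what Smoczyk's computation gives in the Lagrangian case and which survives restriction to $\{r=1\}$ because all the relevant quantities ($\theta$, the mean curvature, the generating function) are $r$-independent.

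Then I would conclude: since $\frac{d}{dt}[d\theta_t] = [d(\partial_t\theta_t)] = 0$ in $H^1(L,\R)$ and $[d\theta_0]=0$ by hypothesis, we get $[d\theta_t]=0$ for all $t$ in the (short) interval of existence. Consequently $d\theta_t$ is exact, so $\theta_t$ admits a single-valued $\R$-valued primitive-type lift (unique up to an additive constant, which can be normalized, e.g.\ by fixing its value at a basepoint or by demanding its integral vanish), and the flow equation \eqref{E-lmcf} then reads as an honest scalar equation for this $\R$-valued $\theta_t$.

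\textbf{Main obstacle.} The genuinely delicate point is getting the evolution equation for $\theta_t$ right in the \emph{Legendrian} (contact) setting rather than just quoting the Lagrangian cone version: one must check that restricting from the cone $Y_0$ to the link $L=\{r=1\}$ does not introduce an $r$-dependent error term in $\partial_t\theta_t$, and that the extra $\xi$-direction term $2\theta\xi$ in \eqref{E-lmcf} (which has no Lagrangian analogue) contributes only through exact pieces. This is exactly where the identities $d\theta(\xi)=0$ and the homogeneity of $\Omega$ and $dvol$ (degree $n+1$) are used, and it is the step that requires care rather than routine computation; everything after that is the cohomological argument above.
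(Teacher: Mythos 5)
The paper does not actually prove this proposition; it is quoted from Smoczyk (Lemma 3.2 and Corollary 3.3 of \cite{Smoczyk}) with no argument supplied, so there is no in-paper proof to compare against. On its own merits, your sketch has the correct cohomological core: $\partial_t(d\theta_t)=d(\partial_t\theta_t)$ is exact, so $[d\theta_t]$ is constant in $t$, hence zero by the hypothesis at $t=0$, hence $d\theta_t$ is exact and $\theta_t$ lifts to a single-valued $\R$-valued function (unique up to a normalizable constant).

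Two remarks. First, a framing issue that borders on a gap: you compute ``along the flow \eqref{E-lmcf}'', but the proposition is stated for an \emph{arbitrary} family of Legendrians induced by functions $f_t$, and it must be, since it is invoked precisely to make the right-hand side $2\,\theta\circ\Psi(I_{f_t})$ of \eqref{E-zero} well defined before one knows that the family solves anything; if your argument genuinely required the flow equation it would be circular. Fortunately it does not: the only input you actually use is that $\partial_t\theta_t$ is a globally defined $\R$-valued function, and this is automatic for any smooth family of $S^1$-valued maps $\theta_t$ on a fixed $L$ (local real lifts differ by locally constant multiples of $2\pi$ that are independent of $t$, so their $t$-derivatives patch). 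You should therefore decouple the conclusion from the first-variation formula. Second, the precise evolution equation $\partial_t\theta=\Delta f+\cdots$ and the bookkeeping of the $2\theta\xi$ term and the $r$-(in)dependence that you single out as the ``main obstacle'' are needed for the parabolicity and short-time existence of \eqref{E-zero}, but not for this proposition; for the Maslov-class statement alone the shortest route is that $[d\theta_t]\in 2\pi\,H^1(L;\Z)\subset H^1(L;\R)$ is a discrete invariant varying continuously in $t$, hence constant. With these adjustments your argument is correct and essentially the one Smoczyk gives.
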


With this preparation, we can define a curvature flow induced by the functions $f_t$, which is equivalent to \eqref{E-lmcf}. 

\begin{proposition}
Suppose $f_t$ is a family of functions satisfying 
\begin{equation}\label{E-zero}
\frac{\p f_t}{\p t}= 2 \theta \circ \Psi(I_{f_t}),
\end{equation}
then $F_t=\Psi\circ I_{f_t}$ satisfies the curvature flow \eqref{E-lmcf}.
\end{proposition}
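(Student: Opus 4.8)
The plan is to compute the velocity $\partial_t F_t$ in the model coordinates furnished by the Legendrian neighborhood theorem (Theorem~\ref{T-lnb}), decompose it into a Reeb part, a transverse part, and a part tangent to $F_t(L)$, and identify the first two with $2\theta\,\xi$ and $H$.

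First write $F_t=\Psi\circ I_{f_t}$ with $I_{f_t}(x)=(f_t(x),x,-df_t(x))$, so $\partial_t F_t=d\Psi(\partial_t I_{f_t})$, where in local coordinates $(s,q^i,p_i)$ on $\R\times T^*L$ one has $\partial_t I_{f_t}=(\partial_t f_t)\,\partial_s-(\partial_i\partial_t f_t)\,\partial_{p_i}$. Since $\Psi^*\eta=ds+p_i\,dq^i$ and $\Psi^*(d\eta)=dp_i\wedge dq^i$, we read off
\[
\eta(\partial_t F_t)=\partial_t f_t=2\theta,\qquad
\iota_{\partial_t F_t}(d\eta)\big|_{TF_t(L)}=-\,d(\partial_t f_t)\big|_{TF_t(L)}=-2\,d\theta\big|_{TF_t(L)},
\]
using \eqref{E-zero} in the last equalities. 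The first identity says that $W:=\partial_t F_t-2\theta\,\xi$ lies in the contact distribution $\ker\eta$ along $F_t(L)$.

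Next I would bring in the Lagrangian-angle identity on the Calabi--Yau cone. Since $H_{Y_0}=J\overline{\nabla}\theta$ is $\omega_X$-dual to $-d\theta$ on $Y_0$, and $\omega_X=r\,dr\wedge\eta+\tfrac12 r^2\,d\eta$ restricts on $\ker\eta$ at $\{r=1\}$ to $\tfrac12 d\eta$ (and $H_{Y_0}$ annihilates $dr$ and $\eta$), the proposition $H=H_{Y_0}|_{\{r=1\}}$ and the fact $d\theta(\xi)=0$ give
\[
\iota_{H}(d\eta)\big|_{TF_t(L)}=-2\,d\theta\big|_{TF_t(L)},\qquad \eta(H)=0.
\]
Combining this with the previous display and $\iota_\xi d\eta=0$, the vector field $W-H=\partial_t F_t-2\theta\,\xi-H$ lies in $\ker\eta$ and satisfies $\iota_{W-H}(d\eta)|_{TF_t(L)}=0$. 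Because $F_t(L)$ is Legendrian, $\eta|_{TF_t(L)}=0$ forces $d\eta|_{TF_t(L)}=0$, so $TF_t(L)$ is a Lagrangian subbundle of the symplectic bundle $(\ker\eta,d\eta)$; hence its $d\eta$-orthogonal complement inside $\ker\eta$ is $TF_t(L)$ itself, and therefore $W-H$ is tangent to $F_t(L)$. Taking normal components and using $\xi\perp TF_t(L)$ (Legendrian) and that $H$ is the mean curvature vector, we obtain $(\partial_t F_t)^{\perp}=2\theta\,\xi+H$, i.e.\ $F_t$ solves \eqref{E-lmcf}.

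The main obstacle is the bookkeeping in the middle step: correctly transferring the Lagrangian mean-curvature/Lagrangian-angle relation from the cone $X$ down to the hypersurface $\{r=1\}$, tracking the factor $\tfrac12$ relating $\omega_X|_{\ker\eta}$ to $d\eta$ and the sign conventions in $H=J\overline{\nabla}\theta$ and in \eqref{E-lmcf}. Everything else---the coordinate computation of $\partial_t I_{f_t}$, the Lagrangian-complement argument, and the extraction of normal components---is routine.
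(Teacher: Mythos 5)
Your proof is correct and follows the same strategy as the paper's: the central computation $\eta(\partial_t F_t)=\Psi^*\eta(\partial_t I_{f_t})=\partial_t f_t=2\theta$ is exactly the one the paper carries out. The paper then simply asserts that, since the deformation stays Legendrian by construction, it suffices to match the Reeb component; your additional steps---showing $\iota_{\partial_t F_t}(d\eta)\vert_{TF_t(L)}=-2\,d\theta=\iota_{H}(d\eta)\vert_{TF_t(L)}$ via the Lagrangian-angle relation on the cone, and the Lagrangian-complement argument in $(\ker\eta,d\eta)$---supply the justification for that assertion, so your write-up is in fact more complete than the one in the paper.
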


\begin{proof}
We note that by Proposition \ref{P-smoczyk}, if the initial data $F_0$ has zero Maslov class, then so does $F_t$, for any $t>0$. Hence, in this case, the Legendrian angle $\theta_t$ is a well-defined $\R$-valued function and \eqref{E-zero} is well-posed. 

Assuming we have a short time smooth solution to \eqref{E-zero}, we need to show that $F_t$ satisfies \eqref{E-lmcf}. Since we know that the Legendrian condition is preserved by such a deformation, we only need to show that $\p_tF_t$ has the right component along $\xi$, namely $\eta(\p_t F_t) = 2  \theta$. By a direct computation, we have $\p_t I_f=(\p_t f, 0, -\p_t df_t)$. Since $\Psi^*\eta=ds+\beta_0$, it follows that $$\p_t F_t =\eta(\p_t \Psi\circ I_f)=\Psi^*(\eta)(\p_t I_f)= \p_t f_t= 2 \theta.$$ \end{proof}

\begin{theorem} Let $F_0: L\rightarrow S$ be a compact Legendrian submanifold with zero Maslov class in a Sasaki-Einstein manifold $S$.  Then there exists $T>0$ and a family $F_t$, $t\in [0, T)$, of Legendrian submanifolds that solves \eqref{E-lmcf} with initial condition $F_0$. 
\end{theorem}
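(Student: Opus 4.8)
The plan is to reduce \eqref{E-lmcf} to the scalar equation \eqref{E-zero} and then invoke the standard short-time existence theory for parabolic equations on a compact manifold. Using the Legendrian neighborhood theorem (Theorem~\ref{T-lnb}) I would identify a neighborhood $U$ of $F_0(L)$ in $S$ with $\R\times T^*L$ via a map $\Psi$ with $\Psi^*\eta=ds+\beta_0$, arranged so that $F_0$ corresponds to the zero section, i.e. to the generating function $f_0\equiv0$. By the propositions preceding this theorem, any family $f_t$ with $f_0=0$ solving
\[
\frac{\p f_t}{\p t}=2\,\theta\circ\Psi(I_{f_t})
\]
produces, via $F_t=\Psi\circ I_{f_t}$ with $I_{f_t}(x)=(f_t(x),x,-df_t(x))$, a family of Legendrian submanifolds solving \eqref{E-lmcf}; by Proposition~\ref{P-smoczyk} each $F_t$ has zero Maslov class, so the Legendrian angle $\theta_t$ is a genuine $\R$-valued function and the equation is well-posed. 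Conversely, modulo diffeomorphisms of $L$, every short-time solution of \eqref{E-lmcf} near $F_0$ is of this form. Hence it suffices to produce a short-time solution of the displayed scalar equation with $f_0=0$.

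The first substantive step is to show this equation is strictly parabolic. The Legendrian angle of the graph $I_f$ over $\R\times T^*L$ is a function of $f$ up to second order: passing to the Lagrangian cone $Y_0\cong\R_+\times L$, using the identity $H_{Y_0}=H_L$ together with the Calabi-Yau relation $H_{Y_0}=J\overline\nabla\theta$, one sees that $\theta$ is, up to lower-order terms, an $\arctan$-type expression in the Hessian $\nabla^2 f$ taken with respect to the induced metric. Linearizing $f\mapsto 2\,\theta\circ\Psi(I_f)$ in a variation $v$, the variation field of $I_f$ is the Legendrian normal variation whose normal component is governed by $\nabla v$ (the higher-dimensional analogue of the relation $\phi=|\p u|_g^{-1}f_u$ from Lemma~\ref{Lemma1}), and differentiating the angle along such a deformation gives
\[
\left.\frac{d}{d\varepsilon}\right|_{\varepsilon=0}2\,\theta\circ\Psi(I_{f+\varepsilon v})=\Delta_{g_f}v+(\text{lower order}),
\]
mirroring the Lagrangian fact that $\theta$ evolves by the heat equation under Lagrangian mean curvature flow. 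Thus the linearization is elliptic with positive-definite principal symbol, so \eqref{E-zero} is a second-order (fully nonlinear, special-Lagrangian-type) strictly parabolic equation on the compact manifold $L$.

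Given parabolicity, short-time existence and uniqueness follow from the standard theory for such equations (linearization together with the inverse function theorem in parabolic H\"older or Sobolev spaces), which is exactly the scheme carried out in Behrndt's thesis \cite{Behrndt} for Lagrangian mean curvature flow in (almost) Calabi-Yau manifolds; the only change is to replace the Lagrangian graph equation over $T^*L$ by the Legendrian graph equation over $\R\times T^*L$ supplied by Theorem~\ref{T-lnb}, which alters neither the principal part nor the structure of the estimates. This yields $f_t$ for $t\in[0,T)$ with $f_0=0$; by continuity $I_{f_t}$ stays inside the Legendrian neighborhood for $T$ small, so $F_t=\Psi\circ I_{f_t}$ is well defined, Legendrian, of zero Maslov class, and solves \eqref{E-lmcf} with $F_0$ as initial condition.

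The main obstacle is the parabolicity computation: identifying the Legendrian angle of a graph as a second-order function of its generating function and verifying that its linearization has the principal symbol of a positive multiple of the Laplacian. Everything else is either formal (the reduction via the neighborhood theorem and the preservation of the Legendrian and zero-Maslov conditions) or a direct transcription of Behrndt's parabolic estimates.
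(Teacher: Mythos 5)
Your proposal follows essentially the same route as the paper: the paper's proof is precisely the reduction of \eqref{E-lmcf} to the scalar equation \eqref{E-zero} with initial condition $f|_{t=0}=0$ via the Legendrian neighborhood theorem, followed by an appeal to Behrndt's parabolic existence theory \cite[Sections 5.3--5.4]{Behrndt}. Your sketch correctly identifies the one substantive point the paper leaves implicit---verifying that the Legendrian angle is a second-order, elliptic function of the generating function so that \eqref{E-zero} is strictly parabolic---and is otherwise a faithful expansion of the paper's argument.
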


The proof follows from adapting the results of Behrndt \cite[Sections 5.3-5.4]{Behrndt} to (\ref{E-zero}) with initial condition $f|_{t=0} = 0$.

\bibliographystyle{amsalpha}
\bibliography{lcsf}

\end{document}